\newtheorem{thm}{Theorem}
\begin{document}
\title{\bf \large Degree Associated Edge Reconstruction Parameters of \\Strong Double Brooms}
\author{\normalsize P. Anusha Devi and S. Monikandan \\ [-0.2cm]
\normalsize Department of Mathematics\\[-0.2cm]
\normalsize Manonmaniam Sundaranar University\\[-0.2cm]
\normalsize Tirunelveli - 627 012\\[-0.2cm]
\normalsize Tamil Nadu, INDIA\\[-0.2cm]
\normalsize {\{shaanu282,~monikandans\}@gmail.com}}
\date{}
\maketitle
\vspace{-0.4cm}

\footnote{Monikandan's research is supported by the DST-SERB, Govt. of India, Grant No. EMR/2016/000157}

\begin{abstract}
\indent\indent\indent An edge deleted unlabeled subgraph of a graph $G$ is an \emph{ecard}. A \emph{da-ecard} specifies the degree of the deleted edge along with the ecard. The \emph{degree associated edge reconstruction number} of a graph $G,~ dern(G),$ is the size of the smallest collection of da-ecards of $G$ that uniquely determines $G.$  The \emph{adversary degree associated edge reconstruction number} of a graph $G,~adern(G),$ is the minimum number $k$ such that every collection of $k$ da-ecards of $G$ uniquely determines $G.$
A \emph{strong double broom} is the graph on at least $5$ vertices obtained from a union of (at least two) internally vertex disjoint paths with same ends $u$ and $v$ by appending leaves at $u$ and $v.$ In particular,  $B(n,n,mP_{k})$ is the strong double broom with $n$ leaves at both the ends $u$ and $v$ and with $m$ internally vertex disjoint paths of order $k$ joining $u$ and $v.$ We show that $dern$ of strong double brooms is $1$ or $2.$ We also determine $adern(B(n,n,mP_{k})).$  It is 3 in most of the cases and 1 or 2 for all the remaining cases, except {$adern(B(1,1,2P_{k}))=5$} for $k\geq 4.$
\end{abstract}

{\bf AMS Subject Classification (2010):} 05C60, 05C07.

{\bf Keywords:} Isomorphism, Ulam's Conjecture, Edge reconstruction number.

\begin{section}
{Introduction}
\end{section}
		
\indent \indent All graphs considered in this paper are finite, simple and undirected.
We shall mostly follow the graph theoretic terminology of \cite{Htext}. A \emph{vertex-deleted subgraph} or \emph{card} $G-v$ of a graph (digraph) $G$ is the unlabeled graph (digraph) obtained from $G$ by deleting the vertex $v$ and all edges (arcs)  incident with $v$. The \emph{deck} of a graph (digraph) $G$ is its collection of cards. Following the formulation in [2], a graph (digraph) $G$ is \emph{reconstructible} if it can be uniquely determined from its deck. The well-known  Reconstruction Conjecture (RC) due Kelly \cite{kelly1} and Ulam \cite{ulam} asserts that every graph with at least three vertices is reconstructible. The conjecture has been proved for many
special classes, and many properties of $G$ may be deduced from its deck. Nevertheless, the full conjecture remains open. Surveys of results on the RC and related problems include \cite{bondy2,manvel}. Harary and Plantholt \cite{HPrn} defined the reconstruction number of a graph $G,$ denoted by $rn(G),$ to be the minimum number of cards which can only belong to {the deck of $G$} and not to the deck of any other graph $H,~H\ncong G,$ {these cards} thus uniquely identifying $G.$ Reconstruction numbers are known for only few classes of graphs \cite{ sur1}.\\
 \indent\indent An extension of the RC to digraphs is the \textit{Digraph Reconstruction Conjecture} (DRC), proposed by Harary \cite{Hrec}, which asserts that every digraph with at least seven vertices is reconstructible. The DRC was disproved by Stockmeyer \cite{Stoc} by exhibiting several infinite families of counter-examples and this made people doubt the RC itself. To overcome this, Ramachandran \cite{Rdi1} introduced degree associated reconstruction for digraphs and  proposed  a new conjecture in 1981. It was  proved  \cite{Rdi1}  that  the  digraphs  in  all  these  counterexamples to the DRC  obey the new conjecture, thereby protecting the RC from the threat posed by these digraph counterexamples. \\
\indent\indent The ordered triple {$(a,b,c)$ where $a,~b$ and $c$} are respectively the number of unpaired outarcs, unpaired inarcs and symmetric pair of arcs incident with $v$ in a digraph $D$ is called the \emph{degree triple of} $v.$ The \emph{degree associated card} or \emph{dacard} of a digraph (graph) is a pair $(d,C)$ consisting of a card $C$ and the degree triple (degree) $d$ of the deleted vertex. The \emph{dadeck} of a digraph is the multiset of all its dacards. 
A digraph is said to be \emph{N-reconstructible} if it can be uniquely determined from its dadeck. The \textit{new digraph reconstruction conjecture} \cite{Rdi1} (NDRC) asserts that all digraphs are N-reconstructible. 
    Ramachandran \cite{Rdrn1,Rdrn2} then studied the degree associated reconstruction number of graphs and digraphs in 2000. The \textit{degree} (\textit{degree triple}) \textit{associated reconstruction number} of a graph (digraph) $D$ is the size of the smallest collection of dacards of $D$ that uniquely determines $D.$ Articles \cite{AM2},~\cite{AM3},~\cite{AM4},~\cite{west} and \cite{m2} are recent papers on the degree associated reconstruction number.\\
 \indent\indent The edge card, edge deck, edge reconstructible graphs and edge reconstruction number are defined similarly with edge deletions instead of vertex deletions. The \textit{ edge reconstruction conjecture}, proposed by Harary \cite{Hrec}, states that all graphs with at least 4 edges are edge reconstructible.  The ordered pair $(d(e), G-e)$ is called a \emph{degree associated edge card} or \emph{da-ecard} of the graph $G,$ where $d(e)$ (called the \emph{degree} of $e$) is the number of edges adjacent to $e$ in $G.$ The \emph{edeck} (\emph{da-edeck}) of a graph $G$ is its collection of ecards (da-ecards). For an edge reconstructible graph $G,$ Molina studied \cite{Molerndis} the \emph{edge reconstruction number} of $G,$ which is defined to be the size of the smallest subcollection of the edeck of $G$ which is not contained in the edeck of any other graph $H,$ $H \not\cong G$. For an edge reconstructible graph $G$ from its da-edeck, the \emph{degree associated edge reconstruction number} of a graph $G,$ denoted by \emph{dern}($G$), is the size of the smallest subcollection of the da-edeck of $G$ which is not contained in the da-edeck of any other graph $H,$ $H \not\cong G$. The \emph{adversary degree associated edge reconstruction number} of a graph $G,$ \emph{adern}($G$), is the minimum number $k$ such that every collection of $k$ da-ecards of $G$ is not contained in the da-edeck of any other graph $H,~H \ncong G$. Degree associated edge reconstruction parameters might be a strong tool for providing evidence to support or reject the Edge Reconstruction Conjecture that remains open. For very few classes of graphs, these edge reconstruction parameters have been determined \cite{AM1, m1, m2, MSr, Iwoca}.\\
\indent A vertex of degree $m$ is called an \emph{$m$-vertex,} and a 1-vertex is called an \emph{end vertex}.
The neighbour of a 1-vertex is called a \emph{base,} and a base of degree $m$ is called an \emph{$m$-base}. 
A neighbour of $v$ with degree $k$ is called a $k$-neighbour of $v.$
A \emph{double broom} is a tree obtained from a path by appending leaves at both ends of the path.
A \emph{strong double broom}, denoted by $B,$ is the graph on at least $5$ vertices obtained from a union of (at least two) internally vertex disjoint $(u,v)$-paths by appending leaves at $u$ and $v.$
More precisely, $B(n_1,n_2,m_1P_{k_1},m_2P_{k_2},\ldots,m_tP_{k_t})$ denotes the strong double broom with $n_1$ leaves at one end $u$ and $n_2$ leaves at the other end $v$ and there are $m_i$ internally vertex disjoint $(u,v)$-paths on $k_i$ vertices for $1\leq i \leq t,~m_i\geq0$ and $k_1<k_2<\ldots <k_t.$
The vertices $u$ and $v$ are called the \emph{hub vertices} and the $2$-vertices are called the \emph{middle vertices}.
It is clear that $m_1=1$ when $k_1=2.$\\
\indent   Recently Ma et al. \cite{m2} have determined $adrn$ of double brooms. In this paper, we determine $dern$ and $adern$ of strong double brooms. We show that $dern(B(n_1,n_2,m_1P_{k_1},m_2P_{k_2},\ldots,m_tP_{k_t}))$ is $1$ or $2$  and that $adern(B(n,n,mP_{k}))$ is 3 in most of the cases. For all the exceptional cases, usually $adern(B(n,n,mP_{k}))$ is 1 or 2, except {$adern(B(1,1,2P_{k}))=5$} for $k\geq 4.$
\section{Dern of Strong Double Brooms}
\indent\indent  The da-ecards of $B$ are classified into three types: a leaf da-ecard $L,$ a middle da-ecard $M$ and a hub da-ecard $K$ are obtained, respectively, by deleting an edge incident to a leaf vertex and a hub vertex, an edge of degree sum $2$ and an edge incident to a hub vertex and a $2$-vertex (Figure~1).
\begin{center}
\scalebox{1} 
{
\begin{pspicture}(0,-2.5789065)(7.7433677,2.5789065)
\psbezier[linewidth=0.04](2.169058,-0.0510935)(2.169058,-0.9272839)(5.6490564,-0.9710935)(5.6490564,-0.0949029)(5.6490564,0.7812877)(2.169058,0.8250969)(2.169058,-0.0510935)
\psbezier[linewidth=0.04](2.2490578,-0.079874)(2.2490578,-0.4310935)(5.569058,-0.4223129)(5.569058,-0.0710935)(5.569058,0.2801259)(2.2490578,0.2713455)(2.2490578,-0.079874)
\psline[linewidth=0.04cm,linestyle=dotted,dotsep=0.16cm](3.7890575,0.14890654)(3.8090577,-0.3110935)
\psline[linewidth=0.04cm](1.4290578,-0.8110935)(2.169058,-0.1110934)
\psline[linewidth=0.04cm](1.4490578,-0.3310935)(2.169058,-0.0910935)
\psline[linewidth=0.04cm](1.4290578,0.1089066)(2.149057,-0.0510935)
\psline[linewidth=0.04cm](5.6290574,-0.0710935)(6.3290577,0.1089066)
\psline[linewidth=0.04cm](5.6490564,-0.0710935)(6.3690577,-0.3310935)
\psline[linewidth=0.04cm](5.6690574,-0.1110934)(6.3490577,-0.8110935)
\psline[linewidth=0.04cm,linestyle=dotted,dotsep=0.16cm](6.3890576,0.7689066)(6.3890576,0.2689066)
\psline[linewidth=0.04cm,linestyle=dotted,dotsep=0.16cm](1.4090577,0.74890655)(1.4090577,0.24890655)
\psline[linewidth=0.04cm](1.4490578,0.8689066)(2.149057,-0.0510935)
\psline[linewidth=0.04cm](5.6690574,-0.03109347)(6.3890576,0.9289065)
\pscustom[linewidth=0.04]
{
\newpath
\moveto(6.7290573,1.0489063)
\lineto(6.8090587,1.0589067)
\curveto(6.849059,1.0639068)(6.8990593,1.0489068)(6.909058,1.0289065)
\curveto(6.919057,1.0089062)(6.929057,0.9439062)(6.9290586,0.8989065)
\curveto(6.9290605,0.8539068)(6.9290605,0.7839068)(6.9290586,0.7589065)
\curveto(6.929057,0.7339062)(6.929057,0.6639062)(6.9290586,0.6189065)
\curveto(6.9290605,0.5739068)(6.9290605,0.4989068)(6.9290586,0.4689065)
\curveto(6.929057,0.43890634)(6.929057,0.3639065)(6.9290586,0.3189065)
\curveto(6.9290605,0.2739065)(6.9440603,0.2089065)(6.959059,0.1889065)
\curveto(6.9740567,0.1689065)(7.004057,0.1289065)(7.0190587,0.1089065)
\curveto(7.0340605,0.0889065)(7.06906,0.0639065)(7.089058,0.0589065)
\curveto(7.1090555,0.0539065)(7.1090555,0.0289065)(7.089058,0.0089065)
\curveto(7.06906,-0.0110935)(7.03906,-0.0610935)(7.029058,-0.0910935)
\curveto(7.019056,-0.1210935)(7.0090556,-0.1860935)(7.009058,-0.2210935)
\curveto(7.00906,-0.2560935)(7.00906,-0.3210935)(7.009058,-0.3510935)
\curveto(7.0090556,-0.3810935)(7.0090556,-0.4360935)(7.009058,-0.4610935)
\curveto(7.00906,-0.4860935)(7.00906,-0.5360935)(7.009058,-0.5610935)
\curveto(7.0090556,-0.5860935)(7.0090556,-0.6360935)(7.009058,-0.6610935)
\curveto(7.00906,-0.6860935)(7.00906,-0.7360935)(7.009058,-0.7610935)
\curveto(7.0090556,-0.7860935)(7.0090556,-0.8360935)(7.009058,-0.8610935)
\curveto(7.00906,-0.8860935)(6.99406,-0.9260935)(6.9790583,-0.9410935)
\curveto(6.9640555,-0.9560935)(6.9240556,-0.9760935)(6.8990574,-0.9810935)
\curveto(6.874059,-0.9860935)(6.81906,-0.9710935)(6.7890587,-0.9510935)
}
\pscustom[linewidth=0.04]
{
\newpath
\moveto(1.008686,-1.0578324)
\lineto(0.9284168,-1.0653757)
\curveto(0.8882822,-1.0691473)(0.8387656,-1.0526221)(0.8293836,-1.0323251)
\curveto(0.82000154,-1.0120282)(0.81199867,-0.9467521)(0.81337774,-0.9017728)
\curveto(0.8147568,-0.8567936)(0.8169022,-0.78682625)(0.8176685,-0.76183814)
\curveto(0.8184348,-0.73685)(0.8205799,-0.666883)(0.82195866,-0.6219041)
\curveto(0.82333744,-0.5769254)(0.8256363,-0.5019608)(0.82655644,-0.47197485)
\curveto(0.82747656,-0.4419889)(0.82977575,-0.36702427)(0.8311548,-0.32204565)
\curveto(0.8325339,-0.277067)(0.8195328,-0.21163732)(0.80515265,-0.19118689)
\curveto(0.79077244,-0.17073645)(0.76201326,-0.12983587)(0.7476343,-0.10938574)
\curveto(0.7332553,-0.0889356)(0.6990372,-0.0628745)(0.679198,-0.057263546)
\curveto(0.65935886,-0.05165258)(0.6601255,-0.026664484)(0.68073124,-0.0072873477)
\curveto(0.7013369,0.012089789)(0.7328555,0.061146796)(0.74376833,0.09082667)
\curveto(0.7546813,0.120506845)(0.7666686,0.18516992)(0.76774293,0.22015284)
\curveto(0.7688174,0.25513604)(0.7708096,0.32010552)(0.7717273,0.35009181)
\curveto(0.7726452,0.38007838)(0.7743313,0.43505257)(0.7750995,0.46004054)
\curveto(0.7758679,0.48502862)(0.7774005,0.53500485)(0.7781647,0.5599929)
\curveto(0.77892894,0.584981)(0.78046155,0.63495785)(0.7812299,0.65994656)
\curveto(0.78199816,0.6849353)(0.7835305,0.73491144)(0.7842945,0.75989896)
\curveto(0.7850585,0.7848864)(0.78659046,0.83486325)(0.78735846,0.85985255)
\curveto(0.78812647,0.88484186)(0.804346,0.9243634)(0.81979746,0.9388955)
\curveto(0.8352489,0.9534277)(0.8758431,0.97219205)(0.9009859,0.9764242)
\curveto(0.92612857,0.9806564)(0.98064244,0.96397763)(1.0100136,0.94306666)
}
\usefont{T1}{ptm}{m}{n}
\rput{90.74481}(-0.09186524,-0.46092767){\rput(0.16249213,-0.2561157){$n_1$ vertices}}
\usefont{T1}{ptm}{m}{n}
\rput{90.74481}(7.5035334,-7.6382284){\rput(7.502492,-0.09611648){$n_2$ vertices}}
\pscustom[linewidth=0.02]
{
\newpath
\moveto(1.3890587,1.1889067)
\lineto(1.4090575,1.2289065)
\curveto(1.4190569,1.2489065)(1.4740577,1.2989064)(1.5190591,1.3289065)
\curveto(1.5640602,1.3589065)(1.6540596,1.4139067)(1.6990577,1.4389067)
\curveto(1.7440556,1.4639066)(1.8290557,1.4939065)(1.8690577,1.4989065)
\curveto(1.9090596,1.5039065)(1.9790597,1.5089065)(2.0090575,1.5089065)
}
\pscustom[linewidth=0.02]
{
\newpath
\moveto(1.9290589,1.4289063)
\lineto(1.9490577,1.4589065)
\curveto(1.9590571,1.4739065)(1.9990569,1.4989065)(2.0290575,1.5089065)
}
\pscustom[linewidth=0.02]
{
\newpath
\moveto(2.0290577,1.5089065)
\lineto(1.9890578,1.5289065)
\curveto(1.9690578,1.5389065)(1.9390584,1.5589066)(1.9290589,1.5689065)
}
\pscustom[linewidth=0.02]
{
\newpath
\moveto(5.6890574,-0.3910935)
\lineto(5.6890574,-0.4710935)
\curveto(5.6890574,-0.5110935)(5.6790576,-0.6060935)(5.6690574,-0.6610935)
\curveto(5.6590576,-0.7160935)(5.634058,-0.8110935)(5.6190586,-0.8510935)
\curveto(5.604059,-0.8910935)(5.5540595,-0.9910935)(5.5190587,-1.0510935)
\curveto(5.484058,-1.1110935)(5.449058,-1.1810935)(5.449058,-1.1910934)
}
\pscustom[linewidth=0.02]
{
\newpath
\moveto(5.4490576,-1.1710935)
\lineto(5.4990587,-1.1510935)
\curveto(5.5240593,-1.1410935)(5.579059,-1.1110935)(5.609058,-1.0910935)
}
\pscustom[linewidth=0.03]
{
\newpath
\moveto(5.4490576,-1.1510935)
\lineto(5.439057,-1.1110935)
\curveto(5.4340568,-1.0910935)(5.419057,-1.0510935)(5.4090576,-1.0310935)
}
\usefont{T1}{ptm}{m}{n}
\rput(2.1740577,1.9389066){leaf vertex}
\usefont{T1}{ptm}{m}{n}
\rput(5.2893705,-1.3410933){hub vertex}
\psbezier[linewidth=0.02](2.8243685,0.7389065)(2.6382456,0.66020477)(3.0280068,1.8073486)(4.0022235,2.0561028)
\psbezier[linewidth=0.02](3.326533,0.821795)(3.211614,0.8189065)(3.3643687,1.5835205)(4.0643687,2.1189065)
\psbezier[linewidth=0.02](4.324369,0.7789065)(4.30437,0.7589065)(3.7643688,1.10962)(4.0443697,2.1189065)
\psbezier[linewidth=0.02](5.0197763,0.7338471)(5.0861053,0.6358557)(4.0617986,1.1840669)(4.027445,2.0944884)
\usefont{T1}{ptm}{m}{n}
\rput(4.3762455,2.3989065){middle vertices}
\usefont{T1}{ptm}{m}{n}
\rput(4.0368686,-2.351094){Figure~1.~Strong Double Broom}
\pscircle[linewidth=0.04,dimen=outer,fillstyle=solid](1.4114109,0.8942325){0.08}
\pscircle[linewidth=0.04,dimen=outer,fillstyle=solid](1.3714108,0.13423252){0.08}
\pscircle[linewidth=0.04,dimen=outer,fillstyle=solid](1.391411,-0.34576747){0.08}
\pscircle[linewidth=0.04,dimen=outer,fillstyle=solid](1.4314109,-0.8057675){0.08}
\pscircle[linewidth=0.04,dimen=outer,fillstyle=solid](6.411411,0.9142325){0.08}
\pscircle[linewidth=0.04,dimen=outer,fillstyle=solid](6.391411,0.13423252){0.08}
\pscircle[linewidth=0.04,dimen=outer,fillstyle=solid](6.411411,-0.3257675){0.08}
\pscircle[linewidth=0.04,dimen=outer,fillstyle=solid](6.411411,-0.82576746){0.08}
\pscircle[linewidth=0.04,dimen=outer,fillstyle=solid](5.6114106,-0.06576748){0.08}
\pscircle[linewidth=0.04,dimen=outer,fillstyle=solid](5.231411,-0.5257675){0.08}
\pscircle[linewidth=0.04,dimen=outer,fillstyle=solid](4.411411,-0.70576745){0.08}
\pscircle[linewidth=0.04,dimen=outer,fillstyle=solid](3.5914109,-0.7257675){0.08}
\pscircle[linewidth=0.04,dimen=outer,fillstyle=solid](2.7914107,-0.56576747){0.08}
\pscircle[linewidth=0.04,dimen=outer,fillstyle=solid](2.5914109,-0.24576749){0.08}
\pscircle[linewidth=0.04,dimen=outer,fillstyle=solid](3.0514107,-0.2857675){0.08}
\pscircle[linewidth=0.04,dimen=outer,fillstyle=solid](4.0314107,-0.34576747){0.08}
\pscircle[linewidth=0.04,dimen=outer,fillstyle=solid](3.5514107,-0.3257675){0.08}
\pscircle[linewidth=0.04,dimen=outer,fillstyle=solid](4.4914107,-0.3257675){0.08}
\pscircle[linewidth=0.04,dimen=outer,fillstyle=solid](5.0314107,-0.26576748){0.08}
\pscircle[linewidth=0.04,dimen=outer,fillstyle=solid](4.731411,0.15423252){0.08}
\pscircle[linewidth=0.04,dimen=outer,fillstyle=solid](3.9114108,0.19423252){0.08}
\pscircle[linewidth=0.04,dimen=outer,fillstyle=solid](3.071411,0.13423252){0.08}
\pscircle[linewidth=0.04,dimen=outer,fillstyle=solid](2.5514107,0.37423253){0.08}
\pscircle[linewidth=0.04,dimen=outer,fillstyle=solid](3.111411,0.5342325){0.08}
\pscircle[linewidth=0.04,dimen=outer,fillstyle=solid](3.6914108,0.5942325){0.08}
\pscircle[linewidth=0.04,dimen=outer,fillstyle=solid](4.251411,0.5742325){0.08}
\pscircle[linewidth=0.04,dimen=outer,fillstyle=solid](4.811411,0.4942325){0.08}
\pscircle[linewidth=0.04,dimen=outer,fillstyle=solid](5.291411,0.35423252){0.08}
\pscircle[linewidth=0.04,dimen=outer,fillstyle=solid](2.2114108,-0.0657675){0.08}
\end{pspicture} 
}
\end{center}

\indent \indent An \emph{extension} of a da-ecard $(d(e),G-e)$ of $G$ is a graph obtained from the da-ecard by adding a new edge joining two non adjacent vertices whose degree sum is $d(e)$ and it is denoted by $H(d(e),G-e)$ (or simply $H$). Throughout this paper, $H$ and $e$ are used in the sense of this definition. In the proof of every theorem, $G$ denotes the strong double broom considered in that theorem.\\
\begin{thm}\label{1}
$dern(B(n,n,mP_k))=
									\begin{cases}
									1& \textit{if $n+m\geq6$ or `~$n+m<6,(n,m)\neq(1,2),(1,3)$ and $k=3$'}\\
									2& \textit{otherwise}\\
									\end{cases}$
									\end{thm}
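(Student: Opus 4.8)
The plan is to analyze the three families of da-ecards directly through the edge-degrees they carry. Writing $d=n+m=\deg u=\deg v$, a leaf da-ecard $L$ records a deleted edge of degree $d-1$, a hub da-ecard $K$ one of degree $d$, and (when $k\geq 4$) a middle da-ecard $M$ one of degree $2$; when $k=3$ there are no middle edges, which is the structural reason this case behaves differently. Throughout I would exploit that, apart from $u$ and $v$, every vertex of $G$ has degree $1$ or $2$, so in any card the only ``large'' available degrees are $d-1$ and $d$. Reconstructing from a da-ecard $(d(e),G-e)$ amounts to choosing a non-adjacent pair of vertices whose degrees in the card sum to $d(e)$ and joining them; the whole proof is the bookkeeping of which such pairs exist.

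For $n+m\geq 6$ I would use the leaf da-ecard and prove $dern=1$. In $G-L$ the degrees present are exactly $0$ (the detached leaf $w$), $1$, $2$, $d-1$ (the base $u$), and $d$ (the vertex $v$). Since no vertex has degree strictly between $2$ and $d-1$, and $d-1\geq 5>4=2+2$, the only non-adjacent pair summing to $d-1$ is $w$ (degree $0$) with the unique degree-$(d-1)$ vertex $u$; hence every extension reattaches $w$ to $u$ and is isomorphic to $G$. The threshold $n+m=6$ is forced precisely here: at $n+m=5$ one has $d-1=4=2+2$, so one may instead join two internal $2$-vertices, producing an extension $H\ncong G$.

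For $k=3$ and $n+m<6$ with $(n,m)\neq(1,2),(1,3)$ I would switch to the hub da-ecard. Deleting $ux$ turns the middle vertex $x$ into a leaf at $v$, so the card has degrees $1$, $2$ (the $m-1$ surviving middle vertices), $d-1$ ($u$) and $d$ ($v$); reconstructing with an edge of degree $d$ forces the pair $(d-1,1)$, that is $u$ together with a leaf, and since every degree-$1$ vertex non-adjacent to $u$ is a leaf of $v$, joining it to $u$ creates a new middle vertex and, by symmetry, every such extension is isomorphic to $G$. The only other admissible pairs have degrees $(2,d-2)$: a middle vertex with a leaf when $d=3$ (the case $(1,2)$) and two middle vertices when $d=4$ with at least two middle vertices surviving (the case $(1,3)$). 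These are exactly the excluded pairs, which is why they — and only they — fail.

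For the remaining cases I would prove $dern=2$ in two steps. For the lower bound $dern\geq 2$ I would display, for each card type, one extension $H\ncong G$: for a leaf card, join two internal $2$-vertices (when $n+m=5$) or a leaf to a $2$-vertex, or two leaves (when $n+m\leq 4$); for a hub card with $k\geq 4$, join $u$ to a leaf of $v$, which manufactures a spurious $P_3$-path and so cannot give $G$; for a middle card, misjoin two degree-$1$ vertices to create a triangle or a path of the wrong length. For the upper bound $dern\leq 2$ I would exhibit a determining pair — two leaf da-ecards (available since $G$ has $2n\geq 2$ leaf edges) work in the generic subcases, with a hub or middle da-ecard substituted where symmetry would force a second copy of $L$ — and argue that any $H$ carrying both cards must simultaneously reattach the detached leaf at $u$ and respect the path lengths, leaving only $H\cong G$. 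I expect this last verification to be the main obstacle: one must check that the families of bad extensions of the two chosen cards meet only in $G$, i.e.\ that the two degree constraints are jointly rigid, and this is the step that genuinely requires going case by case through the small values of $(n,m)$ and $k$.
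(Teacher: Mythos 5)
Your handling of the $dern=1$ cases is correct and is essentially the paper's own argument: for $n+m\geq6$ the leaf da-ecard admits only the pair $(0,n+m-1)$, and for $k=3$ the hub da-ecard admits only pairs $(n+m-1,1)$ once the spurious pairs of type $(1,2)$ (arising exactly when $(n,m)=(1,2)$) and $(2,2)$ (arising exactly when $(n,m)=(1,3)$, since only then do two middle vertices survive) are ruled out. Your degree bookkeeping in fact makes explicit why precisely those two cases are excluded, which the paper's Case~2 glosses over. Your lower-bound sketch for the remaining cases (one bad extension per card type) is also sound, and is a point the paper leaves entirely implicit.

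The genuine gap is the upper bound $dern\leq2$ in the ``otherwise'' case, which is the crux of the theorem. Your primary candidate, two leaf da-ecards, provably fails whenever $n+m\leq4$: for $(n,m)=(2,2)$, $k\geq4$, the graph obtained from $L$ by joining the 1-neighbour of the unique $3$-base to a $2$-vertex at distance two from that base (the paper's $H_2$ of Figure~2) has two da-ecards isomorphic to $L$; for $(n,m)=(1,2)$ the paper's $H_1$ shares two leaf (and two hub) da-ecards with $G$; and for $(n,m)=(1,3)$ an analogous extension exists (the paper's $H_2$ of Figure~3). So ``two leaf cards work generically'' is wrong for half the subcases, and your hedge --- substitute a hub or middle card --- is exactly the paper's choice (leaf $+$ middle for $k\geq4$, leaf $+$ hub for $k=3$), but you supply no argument that these pairs are determining; ``reattach the detached leaf and respect the path lengths'' is a paraphrase of the conclusion, not a proof, and you concede this verification is the open obstacle. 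The missing idea is a cycle-counting rigidity argument: any graph $H$ whose da-edeck contains $(n+m-1,L)$ contains $L$ as a spanning subgraph and hence has at least $\binom{m}{2}$ cycles of length $2k-2$; among all extensions of the middle da-ecard $(2,M)$, only those joining two 1-vertices at distance $2k-3$ attain this count, and every such extension is isomorphic to $G$ (and similarly, for $k=3$, only joining the $(n+m-1)$-base of $K$ to a 1-vertex at distance $3$ works). Without this mechanism, or an equivalent one, the bound $dern(G)\leq2$ --- and with it the entire second line of the theorem --- remains unproven in your proposal.
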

\begin{proof}
For $k=2,~m$ would be 1 (as $G$ is simple) which is excluded in the definition itself.
Therefore $k>2.$\\
\textit{Case~$1.$}~$n+m\geq6.$\\
\indent In any leaf da-ecard $(n+m-1,L),$ exactly two vertices have degree sum $n+m-1$ and hence $G$ can be obtained uniquely from the da-ecard $(n+m-1,L)$ by adding an edge joining the unique isolated vertex and the unique $(n+m-1)$-vertex and thus $dern(G)=1$ in this case.\\ 
\textit{Case~$2.$}~$n+m<6,~(n,m)\neq(1,2),(1,3)$ and $k=3.$\\
\indent In any hub da-ecard $(n+m,K),$ the degrees of the vertices are $1,~2,~n+m-1$ and $n+m;$ in fact only one vertex, say $v$ has degree $n+m-1.$ Hence all simple graphs obtained by adding an edge joining $v$ and any end vertex are isomorphic and they are $G.$ Therefore $dern(G)=1.$\\
\textit{Case~$3.$}~`$n+m<6$ and $k\geq4$' or `$(n,m)=(1,2),(1,3)$ and $k=3$'.\\
\indent Now consider a leaf da-ecard $(n+m-1,L).$ It clearly forces $G$ to have $m\choose2$ cycles of length $2k-2.$
For $k\geq4,$ consider the middle da-ecard $(2,M)$ in addition with $L.$
Then all extensions obtained from $M$ by adding a new edge joining two 1-vertices at distance $2k-3$ are isomorphic and they are $G.$
For $k=3,$ consider the hub da-ecard $(n+m,K)$ in addition with $L.$
Now all extensions obtained from $K$ by adding a new edge joining the unique  $(n+m-1)$-base and any one of the two 1-vertices  at distance $3$ are isomorphic and they are $G.$
Therefore $dern(G)=2.$
\end{proof}
\begin{thm}\label{3}
For $n_1<n_2,$
\begin{equation*}dern(B(n_1,n_2,mP_k))=
									\begin{cases}
									2& \textit{if `$n_1+m\leq5,~n_2-n_1=2$ or $3$ and $k>3$'}\\
									~& \textit{or `$n_1+m\leq5,~n_2-n_1=1,~n_2+m\neq6$ and $k>3$'}\\
									1& \textit{otherwise}\\
									\end{cases}
									\end{equation*}
\end{thm}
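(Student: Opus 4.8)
The plan is to treat the two regimes separately. In the $dern=1$ cases I will pin down a single da-ecard all of whose extensions are isomorphic to $G$; in the $dern=2$ region I will prove $dern\ge 2$ by giving, for each of the five da-ecard types, a competing extension $H\not\cong G$, and $dern\le 2$ by choosing a pair of da-ecards having no common extension other than $G$. Write $\deg u=n_1+m$ and $\deg v=n_2+m$, with $n_1<n_2$ (so $n_1,n_2\ge 1$ and $m\ge 2$). Recall that a leaf da-ecard $L$ at $v$ has degree $n_2+m-1$ and creates an isolated vertex $w$, a middle da-ecard $M$ has degree $2$, and a hub da-ecard $K$ at $v$ has degree $n_2+m$.

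For each $dern=1$ subcase I argue by a degree count on the admissible extensions. If $n_1+m\ge 6$, then in the leaf da-ecard at $u$ the only vertex of degree $n_1+m-1$ is $u$ and the required sum exceeds every low sum $1+1,\,1+2,\,2+2$, so $w$ can only be joined to $u$, recovering $G$. If $n_2-n_1\ge 4$ with $n_1+m\le 5$, the same count works at $v$: the sum $n_2+m-1\ge 6$ admits no low pair, $v$ pairs only with $w$, and the gap $n_2-n_1-1\ge 3$ prevents $u$ from pairing with any present degree, so the extension is forced to be $G$. If $n_2-n_1=1$ and $n_2+m=6$, both hubs have degree $5$ in the leaf da-ecard at $v$, but joining $w$ to $v$ gives $G$ and joining it to $u$ gives $B(n_1+1,n_2-1,mP_k)\cong G$, while no low pair reaches $5$. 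Finally, if $k=3$ there is no middle edge, and I will verify that every admissible reattachment in the hub da-ecard at $v$ (the former $2$-vertex, any leaf of $u$, and when $n_2-n_1=1$ any leaf of $v$) reproduces $G$.

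The crux is $dern\ge 2$ on the region where $k>3$, $n_1+m\le 5$, and either $n_2-n_1\in\{2,3\}$ or $n_2-n_1=1$ with $n_2+m\neq 6$; here I must supply a competitor for each of the five cards. For $L_u$ (sum $n_1+m-1\in\{2,3,4\}$) I realise the sum by a second pair — two leaves, a leaf and a $2$-vertex, or two non-adjacent $2$-vertices — leaving $w$ isolated, so the extension is disconnected, hence $\not\cong G$. For $L_v$ I use that $\deg u=(n_2+m-1)-(n_2-n_1)$: a leaf (when $n_2-n_1=2$) or a $2$-vertex (when $n_2-n_1=3$) joined to $u$, or a suitable small pair when $n_2-n_1=1$, again strands $w$. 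For $K_u$ (resp.\ $K_v$) joining a leaf of $v$ to $u$ (resp.\ a leaf of $u$ to $v$) leaves the broken path as a pendant path with a degree-$1$ vertex at distance $k-2\ge 2$ from its hub, which is impossible in $G$. For $M$, joining a leaf of $u$ to a leaf of $v$ strands a long pendant path when $k\ge 5$, while joining a broken end to a leaf at the same hub creates a triangle when $k=4$. Checking that all these competitors actually occur (non-adjacency, existence of enough $2$-vertices, and $n_1,n_2\ge 1$) and are genuinely non-isomorphic to $G$ over every point of the region — in particular the separate treatment of $k=4$ — is the main technical obstacle.

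For $dern\le 2$ I will use the pair $\{L,M\}$ with $L$ taken at $v$; both are available since $k>3$. Because $m\ge 2$, the middle da-ecard $G-e_M$ is connected, so any $H$ realising $M$ is connected and has no isolated vertex. But any $H$ realising $L$ equals $(G-e_L)+f$ for an edge $f$ of degree sum $n_2+m-1$; if $f$ misses the isolated vertex $w$ then $H$ is disconnected, a contradiction, so $f$ joins $w$ to a vertex of degree $n_2+m-1$, namely $v$, or $u$ when $n_2-n_1=1$, and in either case $H\cong G$. Hence $\{L,M\}$ lies in the da-edeck of no graph other than $G$, giving $dern\le 2$ and, with the lower bound, $dern=2$ throughout the region.
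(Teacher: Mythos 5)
Your proposal is correct, and its skeleton coincides with the paper's: the same single cards settle the $dern=1$ cases (a leaf da-ecard when $n_1+m\geq 6$, when $n_2-n_1\geq 4$, or when $n_2-n_1=1$ and $n_2+m=6$; a hub da-ecard when $k=3$), and the same pair $\{L,M\}$ settles $dern\leq 2$ elsewhere. You differ in two substantive ways, both to your credit. First, your two-card argument runs in the opposite direction from the paper's: the paper lets $L$ force $\binom{m}{2}$ cycles of length $2k-2$ and then argues that in $M$ the new edge must join two $1$-vertices at distance $2k-3$, every such extension being $G$; you instead let $M$ force connectedness, so that in $L$ the new edge must cover the isolated vertex $w$, hence join $w$ to a vertex of degree $n_2+m-1$ (namely $v$, or $u$ when $n_2-n_1=1$). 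Your direction is cleaner: the paper's version silently absorbs the $k=4$ subtlety that leaf-to-leaf and leaf-to-broken-end joins are also pairs of $1$-vertices at distance $2k-3$ and must each be recognized as yielding $G$, whereas connectivity plus the isolated vertex pins the new edge down at once. Second, you prove the lower bound $dern\geq 2$ by exhibiting, for each of the five card types $L_u,L_v,K_u,K_v,M$, an extension not isomorphic to $G$; the paper never does this --- it asserts $dern(G)=2$ immediately after the two-card reconstruction --- so on this point your proof is more complete than the published one, and your observation that for $k=4$ the leaf-to-leaf join in a middle card returns $G$ (forcing the triangle competitor instead) is exactly the subtlety that makes this bound nontrivial. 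The remaining soft spots are only promissory notes: exhaustiveness of your list of reattachments in the $k=3$ hub card (one must also rule out a $2$-vertex paired with $u$ when $n_2-n_1=2$, which fails by adjacency since every intact middle vertex is adjacent to $u$, and a pair of $2$-vertices when $n_2+m=4$, which fails because only $m-1=1$ intact $2$-vertex survives), and the final existence and non-isomorphism checks for your competitors; all of these are routine degree counts that go through.
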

\begin{proof}
For $k=3,$ any hub da-ecard $(n_2+m,K)$ uniquely determines $G.$
So consider $k>3.$\\
\textit{Case~$1.$}~$n_1+m\leq5.$\\
\textit{Case~$1.1.$}~`$n_2-n_1=2$ or $3$' or `$n_2-n_1=1$ and $n_2+m\neq6$'.\\
\indent Any leaf da-ecard forces $G$ to have ${m\choose2}$ cycles of length $2k-2.$
Hence in any middle da-ecard, the newly added edge must be incident to two 1-vertices at distance $2k-3$ and the resulting graph thus obtained is isomorphic to $G$ and hence $dern(G)=2.$\\
\textit{Case~$1.2.$}~`$n_2-n_1\geq4$' or `$n_2-n_1=1$ and $n_2+m=6$'.\\
\indent Any leaf da-ecard $(n_2+m-1,L)$ uniquely determines $G.$\\
\textit{Case~$2.$}~$n_1+m\geq6.$\\
\indent Here any leaf da-ecard $(n_1+m-1,L)$ will determine $G$ uniquely.
\end{proof}
\begin{thm}\label{4'}
$dern(B(n,n,m_1P_{k_1},m_2P_{k_2},\dots,m_tP_{k_t}))=	
						\begin{cases}
						1& \textit{if $n+m\geq6$ or $n+m=5,~k_1=3$}\\
						~& \textit{$n+m=4,~k_1=k_2-1=3$ and $n=2$}\\
						2& \textit{otherwise}\\
						\end{cases}$
\end{thm}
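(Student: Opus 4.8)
The plan is to mirror the case analysis of Theorem~\ref{1}, now with multiple path-lengths $k_1<k_2<\cdots<k_t$. The total number of leaves at each hub is $n$ and we set $m=\sum_i m_i$, so both hubs have degree $n+m$. I would first isolate the three cases where $dern=1$ and show that a single well-chosen da-ecard already pins down $G$, then argue that in all remaining cases no single da-ecard suffices and exhibit two da-ecards that do.

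\textit{The $dern=1$ cases.} First I would treat $n+m\geq 6$: as in Case~1 of Theorem~\ref{1}, take a leaf da-ecard $(n+m-1,L)$. Deleting a leaf edge at $u$ drops one hub to degree $n+m-1$, and because $n+m\geq 6$ no two remaining vertices have degree-sum $n+m-1$ other than the isolated leaf and that unique $(n+m-1)$-vertex; adding the forced edge recovers $G$ uniquely, giving $dern=1$. Next, for $n+m=5$ with $k_1=3$, I would imitate Case~2 of Theorem~\ref{1} using a hub da-ecard $(n+m,K)$: deleting a hub-to-$2$-vertex edge on a $P_3$-path leaves exactly one $(n+m-1)$-vertex, and since the short paths have length $3$ every extension joining that vertex to a pendant $1$-vertex is isomorphic to $G$. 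The third case, $n+m=4$ with $k_1=k_2-1=3$ and $n=2$, is the genuinely new one created by having at least two distinct path-lengths; here I expect a leaf da-ecard alone to work because the specific numerical coincidences that block reconstruction at small $n+m$ are broken by the presence of both a $P_3$-path and a $P_4$-path, so the degree sequence together with the forced cycle structure determines $G$.

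\textit{The $dern=2$ cases.} For the remaining parameter values I would first argue $dern\geq 2$ by showing that every single da-ecard $L$, $M$, or $K$ admits a non-isomorphic extension. The key structural fact, inherited from the single-length analysis, is that a leaf da-ecard forces the correct collection of cycles but does not by itself fix how the pendant structure attaches once $n+m$ is small, while a middle or hub da-ecard does not determine $m$ and the $m_i$ uniquely when the paths are long ($k_i\geq 4$). I would then supply the matching upper bound $dern\leq 2$ by combining a leaf da-ecard with a second da-ecard exactly as in Case~3 of Theorem~\ref{1}: use $L$ together with a middle da-ecard $(2,M)$ when the relevant shortest path has $k\geq 4$ (the new edge must join the two $1$-vertices at the prescribed distance $2k-3$ in the path concerned), and use $L$ together with a hub da-ecard when a $P_3$-path is present. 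The leaf card fixes the multiset of cycle lengths, hence the multiset $\{m_iP_{k_i}\}$, and the second card resolves the placement ambiguity.

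\textit{Main obstacle.} The hard part will be the bookkeeping for the degree-sum coincidences across \emph{several} distinct path-lengths when $n+m$ is small: with a single $k$ one only has to rule out spurious extensions that create a wrong-length cycle, but with $t\geq 2$ lengths I must verify that an extension of a middle or leaf card cannot, for instance, convert a $P_{k_i}$-path plus some pendant edges into a differently-distributed but degree-sequence-matching $P_{k_j}$-configuration. Making precise exactly when the single new case $n+m=4,\,k_1=k_2-1=3,\,n=2$ forces uniqueness — i.e., checking that no alternative strong double broom shares that leaf da-ecard — is where the argument must be carried out most carefully, and it is the step I would write in full detail while treating the parallel cases of Theorem~\ref{1} as a template for the rest.
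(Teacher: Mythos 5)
Your overall skeleton matches the paper's proof: reduce to the case where at least two $m_i$'s are nonzero via Theorem~\ref{1}, settle $n+m\geq6$ with a leaf da-ecard, and handle the generic remaining cases with a leaf da-ecard plus a middle da-ecard whose added edge is forced by the cycle structure. However, there is a genuine error in exactly the case you single out as the crucial new one, namely $n+m=4,\ k_1=k_2-1=3,\ n=2$ (so $G=B(2,2,P_3,P_4)$): a leaf da-ecard does \emph{not} determine $G$ there. The deleted leaf edge has degree $n+m-1=3$, so an extension may join \emph{any} nonadjacent pair of vertices with degree sum $3$; in particular one may join a $1$-vertex to a $2$-vertex (say the leaf at $u$ to an internal vertex of the $P_4$-path), which leaves the isolated vertex untouched and yields a graph $H\ncong G$ whose da-edeck contains the same da-ecard. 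This is precisely the small-$(n+m)$ degree-sum coincidence; the presence of two distinct path lengths does nothing to remove it, since it has nothing to do with cycle lengths. The paper instead resolves this case with a \emph{hub} da-ecard obtained by deleting the hub edge lying on the $P_3$-path: there $d(e)=n+m=4$, the only $2$-vertices in the card are the two internal vertices of the $P_4$-path, which are adjacent, so no $(2,2)$-extension exists, and every $(1,3)$-extension (joining the unique $3$-vertex $u$ to a $1$-vertex) is isomorphic to $G$. This also explains why the hypotheses $k_2=4$ and $n=2$ appear in the statement: with $k_2\geq5$, or with $n=1$ and two $P_4$-paths, the card contains nonadjacent $2$-vertices and the hub da-ecard fails as well.

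A secondary inaccuracy concerns the $dern=2$ cases: the added edge in the middle da-ecard must join two $1$-vertices at distance $k_1+k_t-3$, not $2k-3$. The paper takes the middle da-ecard from a path of order $k_t$; after deleting that edge, the two created $1$-vertices are joined through a shortest surviving $(u,v)$-path of length $k_1-1$, giving distance $k_1+k_t-3$, and it is this distance that the cycle spectrum forced by the leaf da-ecard pins down. Your fallback of ``leaf plus hub when a $P_3$-path is present'' in the $dern=2$ range is not what the paper does and would require separate verification, since the $(2,2)$-extension obstruction above shows that hub da-ecards are delicate exactly when $n+m$ is small. So the architecture is right, but the one genuinely new $dern=1$ case of this theorem is attacked with the wrong card type, and the distance bookkeeping for mixed path lengths needs correction.
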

\begin{proof}
In view of Theorem \ref{1}, we can assume that at least two $m_i$'s are nonzero.
For $n+m\geq6,$ any leaf da-ecard uniquely determines $G$ and hence $dern(G)=1.$
If either `$n+m=5$ and $k_1=3$' or `$n+m=4,~k_1=k_2-1=3$ and $n=2',$ then any hub da-ecard, obtained by removing a hub edge lying on a path of length 3, uniquely determines $G$ and so $dern(G)=1.$\\
\indent For all the remaining cases, we use a leaf da-ecard and a middle da-ecard (obtained by removing an edge lying on a path of length $k_t$) to determine $G.$ 
The leaf da-ecard forces $G$ to have ${m_1\choose2}$ cycles of length $2(k_1-1)$ (if $k_1>2$), ${m_2\choose2}$ cycles of length $2(k_2-1)$ (if $k_2>2$),~\ldots,${m_t\choose2}$ cycles of length $2(k_t-1)$ (if $k_t>2$) and to have $m_1m_2$ cycles of length $k_1+k_2-2,~m_1m_3$ cycles of length $k_1+k_3-2,\ldots,m_1m_t$ cycles of length $k_1+ k_t-2,~m_2m_3$ cycles of length $k_2+k_3-2,\ldots,m_2m_t$ cycles of length $k_2+k_t-2,\ldots,m_{t-1}m_t$ cycles of length $k_{t-1}+k_t-2.$
Hence in the middle da-ecard, the newly added edge must be incident to two 1-vertices at distance $k_1+k_t-3$ and hence $dern(G)=2.$
\end{proof}
\begin{thm}\label{4}
\begin{equation*}dern(B(n_1,n_2,P_2,m_2P_{k_2},\ldots,m_tP_{k_t}))=
									\begin{cases}
									2& \textit{if $\sum\limits_{i=2}^t m_i=1,~k_j>3~(2\leq j\leq t),~n_1=1,~n_2\leq4$}\\
								1& \textit{otherwise}\\
									\end{cases}
									\end{equation*}
\end{thm}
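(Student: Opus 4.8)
The plan is to single out the exceptional family by a degree count at the hubs. Write $m=1+\sum_{i=2}^{t}m_i$ for the total number of internally disjoint $(u,v)$-paths, so that $\deg u=n_1+m$ and $\deg v=n_2+m$, and assume without loss of generality that $n_1\le n_2$. The one new feature here, absent from Theorems~\ref{1}, \ref{3} and \ref{4'}, is the path $P_2$, i.e.\ the edge $uv$: it makes $u$ and $v$ adjacent, it closes a cycle of length $k_i$ together with each further path $P_{k_i}$, and its degree $d(uv)=n_1+n_2+2m-2$ is strictly the largest edge-degree of $G$. I expect the hypothesis ``$n_1=1$ and $\sum_{i\ge2}m_i=1$'' (equivalently $n_1+m=3$) to be exactly the boundary at which a single da-ecard stops determining $G$.

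For the cases giving $dern(G)=1$ I would produce one determining da-ecard in each sub-case. If $n_1+m\ge4$ — which is the negation of ``$n_1=1$ and $\sum_{i\ge2}m_i=1$'', and so covers $n_1\ge2$ as well as $\sum_{i\ge2}m_i\ge2$ — I use the da-ecard $(d(uv),G-uv)$. In $G-uv$ the vertices $u,v$ are the only two of degree exceeding $2$, they are now non-adjacent, and $\deg u+\deg v=d(uv)$; since $n_1+m\ge4$ forces $d(uv)$ to exceed the degree sum of every other non-adjacent pair (the nearest being $v$ with a $2$-vertex), the sole extension re-adds $uv$ and equals $G$. In the remaining regime $n_1=1,\ m=2$ (a single extra path $P_{k_2}$) I split on $k_2$: if $k_2=3$ I delete the hub edge of the $P_3$ at $v$, whereupon $G$ has no $2$-vertex, and the degree constraint forces the new edge to join $v$ to a $1$-vertex incident to $u$, every extension being isomorphic to $G$; if $k_2\ge4$ and $n_2\ge5$ I use the leaf-da-ecard at $v$, whose degree $n_2+1$ exceeds the largest attainable degree sum $2+\deg u=5$ of any pair avoiding $v$, so the isolated leaf must return to the unique $(n_2+1)$-vertex $v$ and the extension is again $G$.

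It remains to prove $dern(G)=2$ for $G=B(1,n_2,P_2,P_{k_2})$ with $k_2\ge4$ and $1\le n_2\le4$. For the lower bound $dern(G)\ge2$ I would show that every single da-ecard admits a second extension $H\not\cong G$. Here the small parameters create the needed coincidences: $\deg u=3=2+1$ and $n_2+1\le5$. In the leaf-at-$v$, hub and $P_2$ da-ecards the prescribed degree sum can be realised by an edge joining a $2$-vertex of $P_{k_2}$ to $u$, to a leaf, or to a second $2$-vertex; because $k_2\ge4$ supplies enough $2$-vertices, such an edge always creates a cycle shorter than $k_2$ (often a triangle) or displaces a leaf, yielding $H\not\cong G$. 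In the two degree-$2$ da-ecards (the leaf at $u$ and the middle edges) the freed or isolated vertex can likewise be attached to a ``wrong'' $2$-vertex. Verifying that such an alternative exists for each edge type across $n_2\in\{1,2,3,4\}$ is the main technical burden, and it is precisely the conditions $k_2\ge4$ and $n_2\le4$ (so that $n_2+1\le2+\deg u$) that make it go through.

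For the upper bound $dern(G)\le2$ I would pair a leaf-da-ecard with a middle-da-ecard $(2,M)$ and show their extension families meet only in $G$. When $n_2\ge2$ the leaf-da-ecard at $v$ has degree $n_2+1\ge3$; the only vertices of that degree in the card are $v$ (and, when $n_2=2$, also $u$), so the deleted leaf can be re-attached only to $v$ or $u$, each returning a graph isomorphic to $G$, whereas every extension not involving the deleted leaf leaves it isolated and hence disconnected. Since a middle-da-ecard is a tree, all of its extensions are connected, so the two families share only $G$ and $dern(G)\le2$. The borderline $n_2=1$, where all leaf-da-ecards have degree $2$ and disconnectedness is unavailable, I would settle with the same pair through a cycle-distance comparison: the leaf-da-ecard keeps the $k_2$-cycle intact, its non-$G$ extensions carry the displaced leaf on a $2$-vertex of the cycle at cycle-distance greater than $1$ from the leaf at $v$, and no extension of $M$ (whose leaves stay at the adjacent hubs) can reproduce that configuration. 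The chief obstacle throughout is the exhaustive per-edge-type verification of the lower bound.
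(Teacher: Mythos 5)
Your proposal is correct, and on the decisive exceptional case it takes a genuinely different route from the paper. For the $dern=1$ cases the two proofs essentially coincide: the paper also disposes of $\sum_{i\ge2}m_i\ge2$, of $n_1\ge2$, and of $k_j=3$ by the da-ecard of the edge $uv$ (your degree-sum argument for $n_1+m\ge4$ is exactly its reasoning), and of ``$n_1=1$, $n_2\ge5$'' by the leaf da-ecard at $v$; your only deviation there is that for $k_j=3$ you delete a hub edge of the $P_3$ rather than $uv$ -- this works, although your claim that the degree constraint forces the new edge to join $v$ to a $1$-vertex at $u$ is not literally true when $n_2=2$ (joining $u$ to a leaf of $v$ also realises the degree sum), but that extra extension is again isomorphic to $G$, so the conclusion stands. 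The real divergence is the exceptional case $B(1,n_2,P_2,P_{k_2})$, $k_2\ge4$, $n_2\le4$: the paper pairs the leaf da-ecard with the da-ecard of $uv$, arguing that the leaf card forces a $k_2$-cycle and the only extension of the $uv$-card containing such a cycle restores $uv$; you instead pair the leaf da-ecard with a middle da-ecard and argue by connectivity -- for $n_2\ge2$ every non-$G$ extension of the leaf card leaves the deleted leaf isolated, while every extension of the middle card (a tree) is connected. That is cleaner than the paper's argument where it applies; for $n_2=1$, where it fails, your cycle-length and pendant-distance comparison is also sound (every extension of the middle card either has its unique cycle of length less than $k_2$, or has a $k_2$-cycle with the two pendant vertices at adjacent cycle vertices and is then isomorphic to $G$), though it needs the finite verification you indicate. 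Finally, you address the lower bound $dern(G)\ge2$, which the paper omits entirely; your constructions (attaching to a ``wrong'' $2$-vertex to create a shorter cycle or a displaced leaf) are the right ones and do succeed for every edge type when $k_2\ge4$ and $n_2\le4$, but, as you concede, this part is left as a sketch -- it is the only incomplete portion of your write-up, and it is precisely the portion the paper itself does not supply.
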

\begin{proof}
\indent Assume $\sum\limits_{i=2}^t m_i$ is 1 as otherwise the da-ecard $(n_1+n_2+2\sum\limits_{i=2}^t m_i,K)$ uniquely determines $G.$
\\
\textit{Case~$1.$}~$k_j>3,~n_1=1$ and $n_2\leq4.$\\
\indent Then $G=B(n_1,n_2,P_2,P_{k_j}).$
Consider a leaf da-ecard $(n_2+\sum\limits_{i=2}^t m_i,L)$ and a hub da-ecard $(n_1+n_2+2\sum\limits_{i=2}^t m_i,K).$
The leaf da-ecard forces $G$ to have a cycle of length $k_j.$
Hence in the hub da-ecard $(n_2+3,K),$ the newly added edge must be incident to the two bases and thus $dern(G)=2.$\\
\textit{Case~$2.$}~$k_j>3,~n_1=1$ and $n_2\geq5.$\\
\indent The da-ecard $(n_2+\sum\limits_{i=2}^t m_i,L)$ uniquely determines $G.$\\
\textit{Case~$3.$}~$k_j>3$ and $n_1\geq2.$\\
\indent Here the da-ecard {$(n_1+n_2+2\sum\limits_{i=2}^t m_i,K)$} will determine $G$ uniquely.\\
\textit{Case~$4.$}~$k_j=3.$\\
\indent Here the da-ecard $(n_1+n_2+2\sum\limits_{i=2}^t m_i,K)$ will do so.
\end{proof}
\begin{thm} \label{5}
For $k_i>2~(1\leq i\leq t),~dern(B(n_1,n_2,m_1P_{k_1},m_2P_{k_2},\dots,m_tP_{k_t}))=1$ or $2.$
\end{thm}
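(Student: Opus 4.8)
Since $G$ always has a da-ecard, $dern(G)\ge 1$, so it suffices to exhibit at most two da-ecards of $G$ that lie in the da-edeck of no other graph; equivalently, recalling the notion of extension above, a collection of da-ecards determines $G$ precisely when $G$ is their only common extension up to isomorphism. Assume without loss of generality that $n_1\le n_2$ and put $m=\sum_{i=1}^{t}m_i$. If $t=1$ the conclusion is Theorem \ref{1} (when $n_1=n_2$) or Theorem \ref{3} (when $n_1<n_2$); if $n_1=n_2$ it is Theorem \ref{1} or Theorem \ref{4'}. Hence only the configuration $n_1<n_2$ with at least two of the $m_i$ nonzero remains, and here $k_i>2$ for all $i$ forces $k_t\ge 4$ and $m\ge 2$.

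First I would settle $n_1+m\ge 6$ with a single card. The leaf da-ecard $(n_1+m-1,L)$ obtained by deleting a leaf edge at the smaller hub $u$ has, in its card, a single isolated vertex and makes $u$ the unique vertex of degree $n_1+m-1$, since $n_1<n_2$ gives $n_2+m>n_1+m-1>2$. As $n_1+m-1\ge 5$, the only other degrees present are $1,2,n_2+m$, and no two of the nonzero degrees sum to $n_1+m-1$; so the isolated former leaf can only be rejoined to $u$, the unique extension is $G$, and $dern(G)=1$ here. (If instead $n_2+m\ge 6$ and $n_2-n_1\ge 4$, the analogous card at $v$ works.)

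For the remaining small values of $n_1+m$, where a single leaf da-ecard admits further extensions because other non-adjacent pairs realize the required degree sum, I would take that leaf da-ecard together with a middle da-ecard $(2,M)$ obtained by deleting an interior edge of a longest path $P_{k_t}$ (this edge exists as $k_t\ge 4$). Exactly as in the proof of Theorem \ref{4'}, any extension of the leaf card retains every cycle of its card, so a common extension of the two cards must carry precisely the forced multiset of cycle lengths, namely the $2(k_i-1)$ and the $k_i+k_j-2$. This pins down the middle-card reconstruction: the two degree-$1$ vertices created by the deletion lie at distance $k_1+k_t-3$, the rejoining edge must connect a pair at that distance, and every resulting extension is isomorphic to $G$. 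Hence the two cards together determine $G$ and $dern(G)\le 2$; combined with the previous paragraph, $dern(G)\in\{1,2\}$ in all cases.

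The step I expect to be the main obstacle is the joint-extension analysis at the boundary values $n_1+m\in\{3,4,5\}$. When $n_1+m=3$ the hub $u$ has degree $2$, indistinguishable by degree from the middle vertices, so in the leaf card alone the isolated vertex could be attached to a path-interior vertex and yield a spurious graph; the work is to show that no such graph is simultaneously an extension of $M$, that is, that the forced cycle lengths together with the degree-sum-$2$ freedom of $M$ admit only the isomorphism class of $G$. The asymmetry $n_1<n_2$ is an asset here, since it distinguishes the two hubs and rules out a reflected reconstruction, but the accounting must still track to which hub each surviving leaf is attached rather than merely the cycle data, and it is this careful tracking at the extreme cases that carries the weight of the argument.
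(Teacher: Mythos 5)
Your proof is correct and follows the same skeleton as the paper's: reduce via Theorems \ref{1}, \ref{3} and \ref{4'} to the case $n_1<n_2$ with at least two nonzero $m_i$'s, dispose of the large cases with a single leaf da-ecard, and cover all remaining cases with a leaf da-ecard plus a middle da-ecard exactly as in Theorem \ref{4'}. The paper additionally carves out Cases 2--3 (single hub da-ecards giving $dern(G)=1$ in a few small configurations), but those are only needed to pin down when the value is $1$ rather than $2$, so folding them into your two-card argument is legitimate for the statement ``$1$ or $2$.'' The one substantive difference is which leaf card you use when $n_1+m\geq 6$: you delete a leaf edge at the \emph{smaller} hub $u$, so the required degree sum is $n_1+m-1$ and the only realizable pair is the isolated vertex with $u$ itself (every other pair either falls short of the sum or would involve the degree $n_2+m$, which overshoots since $n_1<n_2$); the paper instead uses $(n_2+m-1,L)$, the card at the larger hub. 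Your choice is actually the sounder one: in the paper's card, when $n_2-n_1=2$ a leaf at $v$ together with the hub $u$ also has degree sum $n_2+m-1$, and adding that edge leaves the deleted leaf isolated, producing a disconnected extension $H\ncong G$ whose da-edeck contains the card; the pair consisting of a middle vertex and $u$ causes the same failure when $n_2-n_1=3$. So the paper's Case 1, read literally, does not determine $G$ for $n_2-n_1\in\{2,3\}$ with $n_1+m\geq6$, and your variant quietly repairs this. Finally, your closing paragraph honestly flags the boundary analysis of the leaf-plus-middle step ($n_1+m\leq 5$, $n_2-n_1\leq 3$) as the remaining work; note that the paper gives no more detail there either---it simply cites Theorem \ref{4'}---so on that step your proposal sits at the same level of rigor as the published argument.
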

\begin{proof}
\indent In view of Theorems \ref{1}, \ref{3} and \ref{4'}, we assume that $n_1\neq n_2$ and at least two $m_i$'s are nonzero.
Let $m=m_1+m_2+\ldots+m_t.$\\
\textit{Case~1.}~`$n_1+m\geq6$' or `$n_1+m\leq5$ and $n_2-n_1\geq4$' or `$n_1+m=5$ and $n_2-n_1=1$'.\\
\indent The da-ecard $(n_2+m-1,L)$ uniquely determines $G.$\\
\textit{Case~$2.$}~`$n_1+m=3$ and either $n_2=k_2-2=2$ or $n_2=k_1+1=4$' or  `$n_1+m=4,~n_2-n_1<4,~(n_1,n_2)\neq (2,4)$ and $k_1=3$'.\\
\indent Here the da-ecard $(n_2+m,K)$ will do so.\\
\textit{Case~$3.$}~`$n_1+m=4,~(n_1,n_2)=(2,4)$ and $k_2=4$' or `$n_1+m=5,~n_2-n_1=2$ or $3$ and $k_1=3$'.\\
\indent The da-ecard $(n_1+m,K)$ will do the job here.\\
 For all the remaining cases, a leaf and a middle da-ecards determine $G$ uniquely~(as in Theorem~\ref{4'}).
\end{proof}

\begin{thm}
The degree associated edge reconstruction number of strong double brooms is 1 or 2.
\end{thm}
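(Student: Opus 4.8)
The plan is to recognize this statement as a synthesis of the earlier results rather than a fresh computation: every strong double broom belongs to one of the families already analyzed, and in each of those families $dern$ was shown to equal $1$ or $2$. So the proof should reduce to a short, exhaustive case split, together with a verification that the classification leaves no admissible parameter vector uncovered.

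First I would fix an arbitrary strong double broom $G=B(n_1,n_2,m_1P_{k_1},m_2P_{k_2},\ldots,m_tP_{k_t})$ with $k_1<k_2<\cdots<k_t$, and split on the length of the shortest $(u,v)$-path, i.e.\ on whether $k_1=2$ or $k_1>2$. This is the natural dividing line, since a path on two vertices is exactly the edge $uv$ joining the two hubs; its presence alters the degree structure at $u$ and $v$ and is precisely the feature separating Theorem~\ref{4} from the remaining theorems.

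In the case $k_1>2$, every $(u,v)$-path has at least one internal vertex, so $k_i>2$ for all $i$ and Theorem~\ref{5} applies directly, giving $dern(G)\in\{1,2\}$. Here I would note that Theorem~\ref{5} is itself the umbrella result for this regime: its proof dispatches the subcases with $n_1=n_2$, or with at most one nonzero $m_i$, to Theorems~\ref{1}, \ref{3} and \ref{4'}, so nothing further is needed. In the case $k_1=2$, simplicity of $G$ forces $m_1=1$, and since a strong double broom must contain at least two internally disjoint $(u,v)$-paths we have $t\geq2$ with $\sum_{i=2}^{t}m_i\geq1$; hence $G=B(n_1,n_2,P_2,m_2P_{k_2},\ldots,m_tP_{k_t})$ and Theorem~\ref{4} yields $dern(G)\in\{1,2\}$. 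These two cases exhaust all strong double brooms, so $dern(G)$ is $1$ or $2$.

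The step I expect to require the most care is not any estimate but the bookkeeping that the case split is genuinely exhaustive: one must check that the hypotheses of Theorems~\ref{1}, \ref{3}, \ref{4'}, \ref{4} and \ref{5} collectively cover every admissible parameter vector, respecting the standing conventions that $G$ has at least five vertices, that at least two of the $m_i$ are positive, and that $m_1=1$ whenever $k_1=2$ (which also rules out the spurious possibility $t=1$ with $k_1=2$). Since the bound $dern(G)\geq1$ is immediate and each cited theorem supplies the matching upper bound $dern(G)\leq2$ by exhibiting one or two da-ecards that no non-isomorphic graph can share, the conclusion follows at once once this coverage is confirmed.
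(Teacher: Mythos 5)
Your proof is correct and takes essentially the same route as the paper, which disposes of this theorem in one line by citing Theorems~\ref{4} and~\ref{5}: your dichotomy $k_1=2$ (handled by Theorem~\ref{4}, since simplicity forces $m_1=1$) versus $k_1>2$ (handled by Theorem~\ref{5}, which itself defers the cases $n_1=n_2$ or a single nonzero $m_i$ to Theorems~\ref{1}, \ref{3} and~\ref{4'}) is precisely the implicit case split behind that citation. The only slip is in your closing remark, where ``at least two of the $m_i$ are positive'' is stated as a standing convention; the actual requirement is $\sum_i m_i\geq 2$ (e.g.\ $B(n,n,mP_k)$ has a single nonzero $m_i$), but this does not affect your argument.
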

\begin{proof} Follows by Theorems \ref{4} and \ref{5}. 
\end{proof}
\section{Adern of Strong Double Brooms}
\indent\indent  For $2\leq i\leq\lfloor{\frac{k}{2}}\rfloor,$ we denote the da-ecard obtained from $B(n,n,mP_k),$  by deleting an edge on $mP_k$ whose ends are at distance respectively $i-1$ and $i$ from a hub vertex, by $M_i'.$\\
 \indent {\it In all graphs shown in this section, the dashed edge in extensions denotes the edge whose removal results in a de-ecard in common with $G.$}
\begin{thm}\label{9}
\begin{equation*}adern(B(n,n,2P_k))=
									\begin{cases}
									5& \textit{if $n=1$ and $k\geq4$}\\
									2& \textit{if `$n=2,3$ and $k=3$' or `$n\geq3$ and $k=4$'}\\
									1& \textit{if $n\geq4$ and $k=3$}\\
									3& \textit{otherwise}\\
									\end{cases}
									\end{equation*}
									
\end{thm}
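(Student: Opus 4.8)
The plan is to prove the four-line formula case by case, using throughout the standard reformulation that $adern(G)=r$ exactly when the largest common sub-multiset of da-ecards shared by $G$ with any $H\ncong G$ has size $r-1$. Thus for each regime I must (a) exhibit one competitor $H\ncong G$ realising the claimed value $r-1$, and (b) show no competitor realises $r$. I first record the three da-ecard types of $G=B(n,n,2P_k)$ together with their edge-degrees: the leaf card $L$ has degree $n+1$, the hub card $K$ has degree $n+2$, and every middle card $M_i'$ has degree $2$. The automorphisms of $G$ (swapping the two hubs and swapping the two internal paths) render all four hub cards isomorphic and collapse the middle cards into classes $i\sim k-i$. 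The one structural fact driving everything is that, among the cards, only the hub and middle cards are trees, so a competitor sharing a hub or middle card is unicyclic on $2k$ vertices with the shared card obtained by deleting a cycle edge; moreover the degree label forces the re-inserted edge to meet a vertex of prescribed degree.

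For the routine regimes I would argue straight from these degrees. When $n\ge4,\ k=3$ there are no middle edges, and in either surviving card the degree label singles out a unique admissible pair of vertices whose re-joining is forced (exactly as in Cases~1--2 of Theorem~\ref{1}), so every single card already reconstructs $G$ and $adern=1$. For the $adern=2$ regimes ($n=2,3,\ k=3$, or $n\ge3,\ k=4$) I would display one card admitting two non-isomorphic extensions, giving $adern\ge2$, and then check that any second card destroys the spurious extension, giving $adern\le2$. For the generic $adern=3$ regime I would produce a competitor sharing exactly two cards and verify that three cards always overdetermine $G$; here the leaf card (degree $n+1\ge3$) is separated by its label from the degree-$2$ middle cards, which keeps the analysis short.

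The real work is $n=1,\ k\ge4$, where leaf and middle cards collapse to the common degree $2$ and $adern=5$. For the lower bound I would take $H=B(1,1,P_{k-2},P_{k+2})$: its cycle again has length $2k-2$, so its two leaf cards coincide with those of $G$, and cutting either of its two long-arm hub edges opens the cycle into the spider with legs $1,k-2,k$, which is precisely $G$'s hub card $K$ (degree $3$). Hence $G$ and $H$ share two leaf cards and two hub cards, a common sub-multiset of size $4$, so $adern\ge5$ (for $k=4$ one reads $P_{k-2}=P_2$, the single edge $uv$, and the count is unchanged). For the upper bound I must show no $H\ncong G$ shares five cards. Any $H$ containing a leaf card is forced to be a double broom $B(1,1,P_s,P_t)$ with $s+t=2k$; determining which of its hub edges open to the spider $(1,k-2,k)$ shows the overlap is two leaf cards plus $0$ or $2$ hub cards, the value $2$ occurring only for $s=k-2,\ t=k+2$, so double-broom competitors contribute at most $4$. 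A leaf-free competitor is unicyclic of some girth $g\ne 2k-2$ and shares only tree-shaped cards; recording that $G$'s hub card is the spider $(1,k-2,k)$ (one degree-$3$ vertex) while each middle card is the two-centre caterpillar whose two degree-$3$ vertices lie at distance $k-1$, I would then bound how many cycle edges of such an $H$ can open to one of these rigid trees with the correct degree, the target again being at most $4$.

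The step I expect to be the main obstacle is exactly this last bound for leaf-free competitors: one must argue that a single unicyclic $H$ cannot simultaneously present five of the rigid trees $K,M_2',\dots,M_{\lfloor k/2\rfloor}'$ with matching degree labels. The natural route is that realising the hub spider forces $H$ to carry two degree-$3$ vertices joined so that one long leg has the exact length $k$, whereas realising a middle caterpillar forces the two degree-$3$ vertices to sit at distance $k-1$ on the complementary arc; these requirements pull the girth in incompatible directions, so at most one hub card and only a controlled number of middle cards can survive in one $H$. Carrying this enumeration out cleanly, and separately disposing of the small-girth competitors ($g=3,4$) and of the boundary value $k=4$ where several middle classes degenerate and distance $3$ coincides with $k-1$, is where the care is needed; combining the resulting $\le4$ bound with the size-$4$ witness $H=B(1,1,P_{k-2},P_{k+2})$ then yields $adern=5$.
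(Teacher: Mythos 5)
Your skeleton is sound as far as it goes, and your lower-bound witness for the main case is right: $B(1,1,P_{k-2},P_{k+2})$ is in fact \emph{the same graph} as the paper's $H_1$ (the paper builds it as an extension of the hub card $K$ by joining the end of the long leg to the $2$-vertex at distance $2k-3$ from it), and your count of two leaf plus two hub da-ecards matches the paper's. The first genuine gap is that essentially everything beyond this is a plan rather than a proof. The regimes with $adern\in\{1,2,3\}$ are dispatched with ``I would display one card admitting two non-isomorphic extensions \dots and then check,'' and the upper bound in the case $n=1,\ k\geq4$ is explicitly deferred as ``the main obstacle.'' That checking is the bulk of the theorem: the paper needs explicit lower-bound extensions for every regime (its Figure~2 graphs and the small-$k$ constructions) and a thirteen-case upper-bound analysis covering all admissible compositions of a card collection, with each case examining all extensions of a card. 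None of that verification appears in your proposal, so none of the four displayed values is actually established by it.

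The second, more serious gap is that the one mechanism you do propose for the hard step is false. You assert that for a leaf-free competitor ``at most one hub card \dots can survive in one $H$'' because hub and middle realisations pull the girth in incompatible directions. Consider instead the extension $H^{*}$ of $K$ (the spider with legs $1$, $k-2$, $k$) obtained by joining the leaf of the length-$1$ leg to the vertex at distance $2$ along the length-$k$ leg; the degree sum is $1+2=3$, so this is a legitimate extension. Then $H^{*}$ is a $4$-cycle with a path of length $k-2$ attached at each of two opposite cycle vertices; it is not isomorphic to $G$ (its unique cycle has length $4\neq 2k-2$), and it shares no leaf or middle da-ecard with $G$, yet each of its four cycle edges joins a $3$-vertex to a $2$-vertex and its deletion leaves the spider with legs $1$, $k-2$, $k$. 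Hence this single leaf-free competitor shares \emph{all four} hub da-ecards of $G$. Moreover the incompatibility is false in the other direction too: the paper's Case~7 exhibits extensions sharing one hub and one middle card simultaneously. So your heuristic neither bounds hub multiplicity nor separates the two card types; what actually saves the bound is that $G$ has only four hub edges, so a five-card overlap must include a leaf card (your double-broom reduction, which is fine) or a middle card, and the statement that really needs proof is that any competitor sharing a middle card shares at most four cards in total --- the content of the paper's Cases~5, 7, 10 and 11, for which your proposal gives no argument. (Incidentally, $H^{*}$ also shows that the paper's own Case~12, which claims every extension of $K$ has at most two hub da-ecards, is wrong as stated; the theorem survives only because a five-card collection can never consist of hub cards alone.)
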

\begin{proof}

Let $D$ denote any da-ecard of $G.$\\
\textit{Lower bound}:~For $n=1$ and $k\geq4,$ the graph $H_1$ (Figure~$2$), obtained from $K$ by adding an edge $e_1$ joining {the} 1-vertex at distance $k$ from the $3$-base and {the} $2$-vertex at distance $2k-3$ from that 1-vertex, shares two leaf da-ecards and two hub da-ecards with $G.$
Hence $adern(G)\geq5.$\\
\indent For $k=3,$ the graph obtained from $L$ by adding an edge joining a 1-vertex and a $2$-vertex (when $n=2$) or two $2$-vertices (when $n=3$), shares a leaf da-ecard with $G.$ For $n=1$ and $k=3,$ the graph, obtained from $L$ by adding an edge joining an isolated vertex and a $2$-neighbour of the $3$-base, has two leaf da-ecards in common with $G,$ which gives the desired lower bound.\\
\indent For $n=2$ and $k\geq4,$ consider the graph $H_2$ (Figure~$2$) obtained from $L$ by adding an edge $e_2$ joining the 1-neighbour of the $3$-base and a $2$-vertex at distance two from that $3$-base.
Clearly $H_2$ shares two leaf da-ecards with $G.$ 
Hence $adern(G)\geq2.$
\begin{center}
\scalebox{1} 
{
\begin{pspicture}(0,-1.80811)(13.56,1.80811)
\usefont{T1}{ptm}{m}{n}
\rput(6.427344,-1.5802975){Figure~$2.$~The extensions}
\psline[linewidth=0.04cm](1.8769397,0.8081097)(2.6969397,0.7881098)
\psline[linewidth=0.04cm](1.8769397,-0.37189028)(2.6969397,-0.37189028)
\psline[linewidth=0.04cm,linestyle=dotted,dotsep=0.16cm](2.7969396,0.7881098)(3.6169395,0.7681097)
\psline[linewidth=0.04cm,linestyle=dotted,dotsep=0.16cm](2.7969396,-0.3918903)(3.6169395,-0.3918903)
\psline[linewidth=0.04cm](3.6969397,-0.3918903)(4.5169396,-0.3918903)
\psline[linewidth=0.04cm](1.0011584,0.26810977)(1.7611583,0.7881098)
\psline[linewidth=0.04cm,linestyle=dashed,dash=0.17638889cm 0.10583334cm](1.0211583,0.20810975)(1.7611583,-0.37189025)
\psline[linewidth=0.04cm](4.6211586,-0.37189025)(5.381159,0.18810976)
\psline[linewidth=0.04cm](5.4611588,0.18810976)(6.321158,0.18810976)
\psline[linewidth=0.04cm,linestyle=dashed,dash=0.17638889cm 0.10583334cm](0.06115841,0.20810975)(0.90115845,0.20810975)
\psbezier[linewidth=0.04,linestyle=dashed,dash=0.16cm 0.16cm](3.649896,0.7696726)(4.609896,1.5896726)(6.049896,1.1096725)(6.409896,0.14967254)
\psline[linewidth=0.04cm,linestyle=dashed,dash=0.16cm 0.16cm](3.689896,0.7696726)(4.529896,0.7696726)
\usefont{T1}{ptm}{m}{n}
\rput(5.672708,1.3796724){$e_1$}
\usefont{T1}{ptm}{m}{n}
\rput(3.1927083,-1.0203274){$H_1$}
\psline[linewidth=0.04cm](8.95694,0.8281097)(9.77694,0.8081097)
\psline[linewidth=0.04cm](8.95694,-0.35189027)(9.77694,-0.35189027)
\psline[linewidth=0.04cm,linestyle=dotted,dotsep=0.16cm](9.87694,0.8081097)(10.69694,0.78810966)
\psline[linewidth=0.04cm,linestyle=dotted,dotsep=0.16cm](9.87694,-0.37189028)(10.69694,-0.37189028)
\psline[linewidth=0.04cm](10.77694,-0.37189028)(11.596939,-0.37189028)
\psline[linewidth=0.04cm](8.081158,0.28810978)(8.841158,0.8081097)
\psline[linewidth=0.04cm](8.101158,0.22810975)(8.841158,-0.35189024)
\psline[linewidth=0.04cm](11.701159,-0.35189024)(12.461159,0.20810974)
\psbezier[linewidth=0.04,linestyle=dashed,dash=0.16cm 0.16cm](10.8098955,0.86967254)(11.789896,1.6296724)(13.129895,1.4496726)(13.489896,0.48967254)
\psline[linewidth=0.04cm,linestyle=dashed,dash=0.16cm 0.16cm](10.769896,0.78967255)(11.609896,0.78967255)
\usefont{T1}{ptm}{m}{n}
\rput(12.752708,1.6196724){$e_2$}
\usefont{T1}{ptm}{m}{n}
\rput(10.272708,-1.0003276){$H_2$}
\psline[linewidth=0.04cm](7.169896,0.54967254)(8.009896,0.22967254)
\psline[linewidth=0.04cm](7.129896,-0.07032746)(8.009896,0.22967254)
\psline[linewidth=0.04cm](12.449896,0.20967256)(13.409896,0.5096725)
\psline[linewidth=0.04cm](11.609896,0.78967255)(12.429896,0.24967255)
\pscircle[linewidth=0.04,dimen=outer,fillstyle=solid](0.08,0.22343607){0.08}
\pscircle[linewidth=0.04,dimen=outer,fillstyle=solid](0.96,0.22343607){0.08}
\pscircle[linewidth=0.04,dimen=outer,fillstyle=solid](1.84,-0.3765639){0.08}
\pscircle[linewidth=0.04,dimen=outer,fillstyle=solid](2.78,-0.39656392){0.08}
\pscircle[linewidth=0.04,dimen=outer,fillstyle=solid](1.82,0.8034361){0.08}
\pscircle[linewidth=0.04,dimen=outer,fillstyle=solid](2.76,0.78343606){0.08}
\pscircle[linewidth=0.04,dimen=outer,fillstyle=solid](3.6,0.7634361){0.08}
\pscircle[linewidth=0.04,dimen=outer,fillstyle=solid](4.54,0.78343606){0.08}
\pscircle[linewidth=0.04,dimen=outer,fillstyle=solid](6.38,0.22343607){0.08}
\pscircle[linewidth=0.04,dimen=outer,fillstyle=solid](5.4,0.18343608){0.08}
\pscircle[linewidth=0.04,dimen=outer,fillstyle=solid](3.64,-0.39656392){0.08}
\pscircle[linewidth=0.04,dimen=outer,fillstyle=solid](4.58,-0.41656393){0.08}
\pscircle[linewidth=0.04,dimen=outer,fillstyle=solid](7.12,0.5834361){0.08}
\pscircle[linewidth=0.04,dimen=outer,fillstyle=solid](7.12,-0.076563925){0.08}
\pscircle[linewidth=0.04,dimen=outer,fillstyle=solid](8.06,0.24343608){0.08}
\pscircle[linewidth=0.04,dimen=outer,fillstyle=solid](8.9,0.8234361){0.08}
\pscircle[linewidth=0.04,dimen=outer,fillstyle=solid](8.9,-0.35656393){0.08}
\pscircle[linewidth=0.04,dimen=outer,fillstyle=solid](9.84,0.8034361){0.08}
\pscircle[linewidth=0.04,dimen=outer,fillstyle=solid](9.84,-0.35656393){0.08}
\pscircle[linewidth=0.04,dimen=outer,fillstyle=solid](10.74,0.8234361){0.08}
\pscircle[linewidth=0.04,dimen=outer,fillstyle=solid](10.7,-0.3765639){0.08}
\pscircle[linewidth=0.04,dimen=outer,fillstyle=solid](11.64,0.78343606){0.08}
\pscircle[linewidth=0.04,dimen=outer,fillstyle=solid](11.66,-0.3765639){0.08}
\pscircle[linewidth=0.04,dimen=outer,fillstyle=solid](12.48,0.22343607){0.08}
\pscircle[linewidth=0.04,dimen=outer,fillstyle=solid](13.48,0.5634361){0.08}
\pscircle[linewidth=0.04,dimen=outer,fillstyle=solid](13.46,-0.11656392){0.08}
\end{pspicture} 
}
\end{center}
\textit{Upper bound}:~We proceed by thirteen cases and prove that the collection of da-ecards considered under each case determines $G$ uniquely.\\We first give a table of outcome proof for the sake of readability.
\begin{center}
\scalebox{0.75}
{
\begin{tabular}{|c|c|c|c|}\hline
$k$&$n$&Cases&$adern(G)$\\\hline
&$1$&$3$
&3\\\cline{2-4} 
$3$&
$2,3$&
$2,~3$&$2$\\\cline{2-4}
&$\geq4$&$1,~2$&1\\\hline
&$1$&$4,~5.1,~6,~7,~10,~12$&$5$
\\\cline{2-4}
$4$&$2$&$4,~5.1,~9,~10,~13$&$3$
\\\cline{2-4}
&$\geq3$&
$1,~4,~5.1,~8,~10,~13$&$2$\\\hline
&1& $4,~6,~7,~12$&$5$\\\cline{2-4}
$5$&$2$&$4,~5.2.1,~9,~13$&$3$\\\cline{2-4}
 &$\geq3$&
$1,~4,~5.2.1,~8,~13$&$3$\\\hline
&1& $4,~5.2.2,~6,~7,~11,~12$&$5$\\\cline{2-4}
$\geq6$&$2$&$4,~5.2,~9,~11,~13$&$3$\\\cline{2-4}
 &$\geq3$&
$1,~4,~5.2,~8,~13$&$3$\\\hline
\end{tabular}
}
\end{center}

Let $S$ consist of the specified number of da-ecards from $G.$\\
\textit{Case~1.}~Any $L$ for $n\geq 4.$\\
\indent In $L,$ exactly two vertices have degree sum $n+1$ and so $G$ can be determined uniquely.
\vspace{0.3cm}\\
\textit{Case~$2.$}~Any $K$ for $n\geq2$ and $k=3.$\\
\indent In $K,$ the ends of the newly added edge must be a 1-vertex and an $(n+1)$-vertex.
Since all 1-neighbours of the $(n+2)$-base are similar and there is a unique $(n+1)$-vertex, all extensions are isomorphic and they are $G.$
\vspace{0.3cm}\\
\textit{Case~$3.$}~For $k=3,~adern(G)=2$ when $n=2,3$ and $adern(G)=3$ when $n=1.$ \\
\indent For $n=2,3,$ the da-ecard $K$ uniquely determines $G$ by Case~$2.$ Further, the da-ecard $L$ forces every extension other than $G$ to have exactly one da-ecard isomorphic to $L$ and no more da-ecards of $G.$ Hence $G$ can be uniquely determined by $L$ along with one more da-ecard and $adern(G)=2.$\\
\indent For $n=1,$ any $L$ and $K$ together determine $G$ as in Case~3 of Theorem~\ref{1}.
Now we shall show that any three isomorphic da-ecards together determine $G.$
In $K,$ the newly added edge must be incident to a 1-vertex and a $2$-vertex.
The extension (other than $G$), obtained by adding an edge joining a 1-neighbour of the unique $3$-base to a $2$-vertex, has exactly one hub da-ecard and the extension, obtained by joining the 1-neighbour of {the} $2$-base to a $2$-vertex, has exactly two da-ecards isomorphic to $K.$ 
Every extension non isomorphic to $G$ of $L$ has exactly two da-ecards isomorphic to $L.$\vspace{0.3cm}\\
\textit{Case~$4.$}~For $k\geq4,$ $L$ and $M$ (or $K$) when $n=2,~3,$ and $L$ and $M$ when $n=1.$\\ 
\indent For $n=2,3,$ the da-ecard $M,$ or $K$ forces $G$ to be connected and hence in $L,$ the only possibility to join $e$ is joining the unique isolated vertex and the unique $(n+1)$-vertex.\\
\indent For $n=1,$ the da-ecard $L$ forces $G$ to have a cycle of length $2(k-1)$ and hence in $M,$ the only possibility to join $e$ is joining the two 1-vertices at distance $2k-3.$\vspace{0.3cm}\\
\textit{Case~$5.$}~`$K$ and $M_i,~(2\leq i\leq \lfloor{\frac{k}{2}}\rfloor)$ when $k>4,~n\geq2$ or $k=4$' and `$M_i$ and $M_j,~(2\leq i<j\leq \lfloor{\frac{k}{2}}\rfloor)$ when $k>5$'.\\
\indent \textit{Case~$5.1.$}~$k=4.$\\
\indent \indent The da-ecard $K$ forces $G$ to have every base with at most n neighbours of degree 1 and hence \indent in $M_i,$ the new edge $e$ must be joined two 1-neighbours of the bases with $n+1$ neighbours of \indent degree 1.\\
\indent \textit{Case~$5.2.$}~$k>4.$\\
\indent \textit{Case~$5.2.1.$}~$K$ and $M_i$ when $n\geq2.$\\
\indent \indent The da-ecard $M_i$ forces $G$ to have two $(n+2)$-vertices and hence in $K,$ the new edge $e$ must be \indent joined the unique $(n+1)$-vertex and some 1-vertex.
The extension other than $G,$ obtained from $K$ \break \indent by joining $e$ to a 1-neighbour of the $(n+2)$-base, has exactly one da-ecard isomorphic to $K$ (since \break \indent the removal of any edge other than $e$ results in a disconnected da-ecard or a da-ecard having two \indent bases of degree at least 3 at distance $2$) and has no middle da-ecards (since  the removal of any \indent edge  results in a disconnected da-ecard or a da-ecard with a 1-vertex at distance $k-2$ from the \indent nearest  $(n+2)$-base).\\ 
\indent \textit{Case~$5.2.2.$}~$M_i$ and $M_j~(i\neq j).$\\
\indent \indent The da-ecards $M_i$ and $M_j$ have 1-vertices at distance $l_1>1$ and $l_2\geq1,$ respectively from \indent the nearest $(n+2)$-base such that $l_1>l_2$ (say).
Every extension other than $G,$ of $M_i$ has \indent no more middle da-ecards, since the removal of any $2$-edge results in a disconnected da-ecard or a \indent 1-vertex at distance $l_1$ or $k-l_1-2$ from the nearest $(n+2)$-vertex.\vspace{0.3cm}\\
\textit{Case~$6.$}~$\alpha L,~\beta K,~\alpha,\beta\geq1,~\alpha+\beta=4$ and $D$ when $n=1$ and $k\geq4.$\\ 
\indent The da-ecard $L$ forces $G$ to have a cycle of length $2(k-1)$ and hence the only extension of $K$ non isomorphic to $G$ is obtained by adding an edge joining {the} 1-vertex at distance $k$ from the {unique} $3$-base and to a $2$-vertex at distance $2k-3.$
This extension has exactly two da-ecards isomorphic to $L$ (obtained by removing each pendant edge) and  exactly two da-ecards isomorphic to $K$ (obtained by removing the $3$-edge incident to base) and has no more da-ecards of $G.$\vspace{0.3cm}\\
\textit{Case~$7.$}~$K,~M_i$ and $D$ when $n=1$ and $k>4.$\\
\indent The da-ecard $M_i$ has a 1-vertex at distance $l>1$ from the nearest $(n+m)$-base.
The extension other than $G$ obtained from $M_i$ by joining $e$ with a 1-vertex at distance $l$ or $k-l-2$ from the nearest $3$-base and a 1-neighbour of the other $3$-base, then the resulting graph has exactly one da-ecard $M_i$ (obtained by removing $e$) and exactly one da-ecard $K$ (obtained by removing the $3$-edge (non adjacent to $e$) lying on the cycle which is incident to a 3-vertex whose neighbours are $2$-vertices).
The above extensions have no more hub da-ecards (since the removal of any $3$-edge results in a disconnected da-ecard or a da-ecard with no $3$-base or a da-ecard having no 1-vertex at distance $k-2$ from the nearest $3$-base), no da-ecards isomorphic to $L$ (since the removal of any $2$-edge results in a da-ecard with no cycle of length $2(k-1)$) and no more middle da-ecards (since the removal of any $2$-edge results in a disconnected da-ecard or in a da-ecard with a 1-vertex at distance $l$ or $k-l-2~(>1)$ from the nearest $(n+2)$-vertex).
For all other extensions, the resulting graph has exactly one da-ecard isomorphic to $M_i$ and has no more da-ecards of $G.$\vspace{0.3cm}\\
\textit{Case~$8.$}~$2L$ determine $G$ when $k\geq4$ and $n=3.$\\
\indent Every extension (other than $G$) of $L$  has exactly one da-ecard isomorphic to $L$ (obtained by removing $e$) as the removal of any other $4$-edge results in a da-ecard with a cycle of length less than $2(k-1).$\vspace{0.3cm}\\
\textit{Case~$9.$}~$2L$ and $D$ when $k\geq4$ and $n=2.$\\
\indent Every extension (other than $G$) of $L$  obtained by joining $e$ with the 1-neighbour of the unique $3$-base and with a $2$-vertex at distance two from the $3$-base has exactly two da-ecards $L$ (obtained by removing the edges, lying on the cycle $C_4,$ incident to the $3$-neighbour of the base) as the removal of any other $3$-edge results in a da-ecard with a cycle $C_4$ or two bases at distance less than $k-1$ and no da-ecards $K$ and $M$ (since the removal of any other edge results in a da-ecard with an isolated vertex).
For any other extension, the resulting da-ecard has no leaf da-ecards (since the removal results in a da-ecard having a cycle of length less than $2(k-1)$) and has no da-ecards $K$ and $M_i$ (since the removal of any other edge results in a da-ecard with an isolated vertex).\vspace{0.3cm}\\
\textit{Case~$10.$}~$2M_i$ when $k=4.$\\
\indent The extension non isomorphic to $G$ of $M_i$ has exactly one  middle da-ecard (obtained by removing $e$), since the removal of any other $2$-edge results in a disconnected da-ecard.\vspace{0.3cm}\\
\textit{Case~$11.$}~$2M_i$ and $D$ when $k>5.$\\
\indent Let $M_i$ have a 1-vertex at distance $l>1$ from the nearest $(n+2)$-base.
Every extension of $M_i$ other than $G,$ obtained by   joining $e$ with a 1-vertex at distance $l$ or $k-l-2~(>1)$ from the nearest $(n+2)$-vertex and a 1-neighbour of the same $(n+2)$-base, has exactly two middle da-ecards and the remaining extensions other than $G$ has exactly one middle da-ecard as the removal of any other edge results in a da-ecard with no 1-vertex at distance $k-l-2~(>1)$ or $l$ from the nearest $(n+2)$-vertex or $(n+3)$-vertex or two 1-vertices at distance $k-l-2~(>1)$ or $l$ from the same $(n+2)$-vertex. None of the extensions of $M_i$ other than $G$ have a leaf da-ecard (since the removal results in a da-ecard having a cycle of length less than $2(k-1)$) and hub da-ecard (since the removal of any $(n+2)$-vertex results in a disconnected da-ecard or a da-ecard with a 1-vertex at distance $k-l-2~(>1)$ or $l$ from the nearest $(n+2)$-vertex or $(n+3)$-vertex.\vspace{0.3cm}\\
\textit{Case~$12.$}~$3K$ when $n=1$ and $k\geq4.$\\
\indent Every extension of $K$ other than $G$ must be obtained by joining $e$ with a 1-vertex and a $2$-vertex.
Clearly every extension has at most two hub da-ecards as the resulting extension has exactly two $3$-edges or the removal of any $3$-edge other than $e$ results in a disconnected da-ecard or a da-ecard having a cycle or a base with {two} neighbours of degree 1 or two $3$-bases or no $3$-base.\vspace{0.3cm}\\
\textit{Case~$13.$}~$2K$ when $n\neq1$ and $k\geq4.$\\
\indent Every extension of $K$ other than $G$ obtained by joining $e$ with two $2$-vertices (when $n=2$) or a 1-vertex and an $(n+1)$-vertex.
Clearly every extension has exactly one hub da-ecard, since the resulting extension has exactly one $(n+2)$-edge or the removal of any $(n+2)$-edge {other} than $e$ results in a disconnected da-ecard or a da-ecard having a base with $n-1$ neighbours of degree 1 (when $n>2$) or a `$3$-base or $4$-base' with one 1-neighbour (when $n=2$) or two $3$-bases (when $n=2$), which completes the proof of the theorem.
\end{proof}
\begin{thm}\label{10}
For $m\geq3,$
\begin{equation*}adern(B(n,n,mP_k))=
									\begin{cases}
									3& \textit{if `$k\geq5$' or `$m=3,~k=4$ and $n=1$'}\\
									2& \textit{if `$m=k=n+1=3$' or `$m=k+1=n+3=4$' or `$m=k=n+2=3$'}\\
									~& \textit{`$m\geq k=4$' or `$m=k-1=3$ and $n\geq2$'}\\
									1& \textit{otherwise}\\
									\end{cases}
									\end{equation*}						
\end{thm}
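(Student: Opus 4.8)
The plan is to establish the formula by the same scheme used for Theorem~\ref{9}, organising the argument into the regimes $k=3$, $k=4$ and $k\ge5$ and, inside each, splitting on $n=1$ versus $n\ge2$ and on the size of $m$; in every regime I match a lower bound (an explicit $H\not\cong G$ realising the claimed number of common da-ecards) with an upper bound (every collection of the stated size forces $G$). Throughout I use that the three card types carry edge-degrees $n+m-1$ (for $L$), $n+m$ (for $K$) and $2$ (for each $M_i$, which exists only when $k\ge4$), and the two invariants every admissible extension must respect: the girth of $G$ is $2(k-1)$, and the number of shortest cycles visible in a card is $\binom{m}{2}$ for a leaf card and $\binom{m-1}{2}$ for a hub or middle card. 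Recall also that an extension $H(d(e),G-e)$ is admissible only if the re-added edge joins two non-adjacent vertices whose degrees in the card sum to $d(e)$.

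First I would settle the value $1$, which by the table occurs exactly when $k=3$ and $n+m\ge6$. Here there are no middle cards, and in either a leaf card or a hub card the degrees $n+m-1$ and $n+m$ are each attained by a single vertex while every other degree is at most $2$; since $n+m\ge6$, no admissible re-added edge can avoid the unique $(n+m-1)$-vertex, so the only extension is $G$ and every single card determines $G$. For the lower bounds $adern\ge2$ I would exhibit a single confusable card obtained by joining two small-degree vertices whose degree-sum equals the deleted edge's degree, thereby creating a cycle shorter than the girth: for $k=3$ and $n+m\le5$ one joins two $2$-vertices (or a $2$-vertex and a $1$-vertex) in a leaf or hub card to form a triangle, and for $k\ge4$ one joins the two degree-$1$ vertices sitting at a common hub of a middle card $M_i$ (two leaves when $n\ge2$, or the leaf and the broken-path pendant when $n=1$), again producing a triangle. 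Each such $H\not\cong G$ returns exactly the one card it was built from. For the stronger bounds $adern\ge3$ (all $k\ge5$, and the single triple $k=4,m=3,n=1$) I would construct, in the spirit of the gadget $H_1$ of Theorem~\ref{9}, an $H\not\cong G$ that reattaches the free end of a broken path to an interior $2$-vertex at the distance $2k-3$ which regenerates a cycle of length $2(k-1)$; the residual symmetry of this $H$ makes it return two cards of $G$, and one checks that it returns no third.

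The upper bounds are the substance of the proof. To prove $adern\le c$ I must show that no graph $H\not\cong G$ shares $c$ da-ecards with $G$, i.e. that every extension $H\neq G$ of a given card contains fewer than $c$ further cards of $G$. I would treat the possible coincidences according to the type(s) of the shared cards, exactly along the lines of Cases~1--13 of Theorem~\ref{9}. The recurring mechanism is that deleting a second edge from such an $H$ must either disconnect it, or destroy a shortest cycle (so the count of cycles of length $2(k-1)$ falls below the value $\binom{m}{2}$ or $\binom{m-1}{2}$ demanded by the card type), or create a base with the wrong number of degree-$1$ neighbours, or leave a degree-$1$ vertex at the wrong distance from the nearest hub; each of these prevents the resulting card from being a card of $G$. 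Carrying this out shows that the triangle gadgets contribute only one common card, the $H_1$-type gadgets exactly two, and that nothing reaches three, which pins $adern$ at $1$, $2$ or $3$ as claimed.

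The main obstacle is precisely this exhaustive accounting of surviving cards in the upper bounds, together with the split between $k=4$ and $k\ge5$. When $k=4$ the two pendant segments produced by a middle card have the short, interchangeable lengths $i-1$ and $k-1-i$, so many would-be confusions are absorbed back into $G$ by an automorphism and an alternative $H$ can keep at most one card; this is what lowers the value to $2$ (and to $1$ only in the abundant $k=3$, $n+m\ge6$ range). For $k\ge5$ a genuinely longer pendant path survives a second deletion, which is what admits the two-card gadget and forces the value up to $3$. The $n=1$ subcases are the most delicate, since the absence of a second leaf at each hub removes the symmetry that, for $n\ge2$, collapses confusing extensions back to $G$; the borderline triple $k=4,m=3,n=1$ (where $\binom{m-1}{2}=1$ is as small as possible while leaf-symmetry is already lost) must be handled by its own two-card gadget. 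I expect essentially all of the remaining effort to lie in these finer, individually tailored counts, the analogues of Cases~6,~7,~11 and~12 of Theorem~\ref{9}.
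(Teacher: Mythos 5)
Your overall architecture matches the paper's (explicit confusable extensions for the lower bounds, then a type-by-type accounting of shared cards in the style of Cases 1--13 of Theorem \ref{9} for the upper bounds), and your $k=3$ gadgets and your triangle gadget for $k=4$ (joining two 1-vertices at a common hub of $M_i$, degree sum $1+1=2$) are sound. But the gadget you propose for the crucial value-3 cases --- all $k\geq 5$, plus $(k,m,n)=(4,3,1)$ --- is broken, and the failure is exactly the admissibility constraint you yourself state. Re-attaching the free end of a broken path (a 1-vertex of $M_i$) to an interior 2-vertex joins vertices whose card-degrees sum to $3$, whereas the edge degrees occurring in $G$ are only $n+m-1$, $n+m$ and $2$; since $m\geq 3$ forces $n+m\geq 4$, the value $3$ is never among them. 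Concretely, if you join the free end $a_{i-1}$ to the interior vertex $b_i$ of an intact path, then in the resulting graph $H$ both the new edge and the adjacent path edge $b_{i-1}b_i$ have edge degree $3$ (the vertex $b_i$ now has degree 3), so deleting either one yields the pair $(3,M_i)$, which is not a da-ecard of $G$; every other deletion leaves either the short cycle of length $2i$ or a non-hub vertex of degree 3, so this $H$ shares \emph{no} da-ecard with $G$ at all, and your lower bound collapses. The reason the analogous move works in Theorem \ref{9} is the coincidence $n+m=1+2=3$, which makes ``1-vertex joined to 2-vertex'' admissible for a hub card there; that coincidence disappears the moment $m\geq 3$, so the gadget cannot be transplanted.

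The paper's value-3 gadgets are different and do respect the degree labels. For $k\geq 5$ one joins two \emph{1-vertices at distance 3} in $M_i$: a leaf at a hub together with the broken end at distance 2 from that same hub (choosing $i$ so that such an end exists). The degree sum is $1+1=2$, the resulting $H_1$ contains a 4-cycle (hence $H_1\not\cong G$, as the girth of $G$ is $2(k-1)\geq 8$), and $H_1$ returns \emph{two} middle da-ecards of $G$: one by deleting the new edge, and one by deleting the stub edge on the 4-cycle, after which the hub again has the right number of pendant neighbours plus a pendant path of the right length. For $(k,m,n)=(4,3,1)$ the paper instead extends a \emph{leaf} card, joining its 1-vertex to a 2-vertex at distance 3; this is admissible precisely because the leaf-edge degree is $n+m-1=3$, and it yields two common leaf da-ecards. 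Without correct gadgets of this kind your argument can at best give the upper bounds (which you only sketch, along the same lines as the paper), not the stated equalities.
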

\begin{proof}
\textit{Lower bound:}~For $k\geq5,$ the graph $H_1$ (Figure~$3$), obtained from $M_i$ by joining a new edge $e_1$ between two 1-vertices at distance $3,$ shares two middle da-ecards with $G$ and hence $adern(G)\geq3.$\\
\indent For $m=3,~k=4$ and $n=1,$ the graph $H_2$ (Figure~$3$), obtained from $L$ by joining a new edge $e_2$ between the 1-vertex and a $2$-vertex at distance $3,$ shares two leaf da-ecards with $G.$
Hence $adern(G)\geq3.$\\
\indent For `$m\geq k=4$' or `$m=k-1=3$ and $n\geq2$', the graph, obtained from $K$ by joining a new edge between two 1-vertices at distance $2,$ has a hub da-ecard in common with $G$ and so $adern(G)\geq2.$
\indent The graph, obtained from $L$ by joining a new edge between two $2$-vertices for $m=k+1=n+3=4$ or $m=k=n+1=3$ and joining the 1-vertex and a $2$-vertex for $m=k=n+2=3,$ has a leaf da-ecard in common with $G,$ which gives the desired lower bound.
\begin{center}
\scalebox{1} 
{
\begin{pspicture}(0,-1.7765542)(10.86,1.7365543)
\usefont{T1}{ptm}{m}{n}
\rput(0.5785608,-0.40877196){$e_1$}
\psline[linewidth=0.04cm](1.8357483,0.9796654)(2.655748,0.9596654)
\psline[linewidth=0.04cm,linestyle=dashed,dash=0.17638889cm 0.10583334cm](1.8357483,0.27966544)(2.655748,0.27966544)
\psline[linewidth=0.04cm](2.7557485,0.9596654)(3.575748,0.93966544)
\psline[linewidth=0.04cm,linestyle=dotted,dotsep=0.10583334cm](3.8241858,0.21966553)(4.224186,0.23966552)
\psline[linewidth=0.04cm](1.0757483,0.9596654)(1.7157483,0.9796654)
\psline[linewidth=0.04cm](1.0757483,0.93966544)(1.7557483,0.25966543)
\psline[linewidth=0.04cm](4.415748,0.93966544)(5.0957484,0.9196654)
\psline[linewidth=0.04cm](4.435748,0.25966546)(5.0957484,0.87966543)
\psbezier[linewidth=0.04,linestyle=dashed,dash=0.17638889cm 0.10583334cm](0.068022065,0.5422236)(0.16418579,-0.38033447)(2.4696646,-0.46033445)(2.7041857,0.19772445)
\psline[linewidth=0.04cm](1.0041858,0.95966554)(1.7241858,1.6196655)
\psline[linewidth=0.04cm](1.8041859,1.6196655)(2.6641858,1.6196655)
\psline[linewidth=0.04cm](4.404186,1.6596655)(5.124186,0.9396655)
\psline[linewidth=0.04cm](5.144186,0.95966554)(6.0041857,1.2396655)
\psline[linewidth=0.04cm](5.144186,0.9396655)(6.0441856,0.5596655)
\psline[linewidth=0.04cm](0.10418578,1.2596655)(1.0241858,0.95966554)
\psline[linewidth=0.04cm](1.0241858,0.95966554)(0.10418578,0.5996655)
\psline[linewidth=0.04cm](2.6957486,1.6196654)(3.515748,1.5996654)
\psline[linewidth=0.04cm,linestyle=dotted,dotsep=0.10583334cm](3.8441858,0.9396655)(4.244186,0.95966554)
\psline[linewidth=0.04cm,linestyle=dotted,dotsep=0.10583334cm](3.8041859,1.5996655)(4.204186,1.6196655)
\usefont{T1}{ptm}{m}{n}
\rput(3.1642776,-0.9087416){$H_1$}
\usefont{T1}{ptm}{m}{n}
\rput(5.8974023,-1.5487417){Figure~$3.$~The extensions $H_1$ and $H_2$}
\usefont{T1}{ptm}{m}{n}
\rput(7.478561,-0.26877195){$e_2$}
\psline[linewidth=0.04cm](8.595748,0.9796654)(9.415748,0.9596654)
\psline[linewidth=0.04cm](7.8357487,0.9596654)(8.475749,0.9796654)
\psbezier[linewidth=0.04,linestyle=dashed,dash=0.17638889cm 0.10583334cm](6.864186,0.8596655)(6.9841857,-0.04033447)(9.229664,-0.36033446)(9.464186,0.29772443)
\psline[linewidth=0.04cm](7.764186,0.95966554)(8.484186,1.6196655)
\psline[linewidth=0.04cm](8.564186,1.6196655)(9.424186,1.6196655)
\usefont{T1}{ptm}{m}{n}
\rput(8.684278,-0.8887417){$H_2$}
\psline[linewidth=0.04cm](9.424186,1.6196655)(10.124186,1.0196655)
\psline[linewidth=0.04cm](9.424186,0.95966554)(10.124186,0.9796655)
\psline[linewidth=0.04cm](6.904186,0.95966554)(7.784186,0.95966554)
\psline[linewidth=0.04cm](7.804186,0.9196655)(8.504186,0.37966555)
\psline[linewidth=0.04cm,linestyle=dashed,dash=0.16cm 0.16cm](8.544186,0.35966554)(9.444186,0.35966554)
\psline[linewidth=0.04cm](9.544186,0.41966555)(10.104186,0.9796655)
\pscircle[linewidth=0.04,dimen=outer,fillstyle=solid](9.44,0.95655423){0.08}
\pscircle[linewidth=0.04,dimen=outer,fillstyle=solid](10.78,0.9765542){0.08}
\pscircle[linewidth=0.04,dimen=outer,fillstyle=solid](0.12,1.2565542){0.08}
\pscircle[linewidth=0.04,dimen=outer,fillstyle=solid](0.08,0.5565542){0.08}
\pscircle[linewidth=0.04,dimen=outer,fillstyle=solid](1.72,1.5965542){0.08}
\pscircle[linewidth=0.04,dimen=outer,fillstyle=solid](2.64,1.6165543){0.08}
\pscircle[linewidth=0.04,dimen=outer,fillstyle=solid](3.52,1.6165543){0.08}
\pscircle[linewidth=0.04,dimen=outer,fillstyle=solid](4.38,1.6565542){0.08}
\pscircle[linewidth=0.04,dimen=outer,fillstyle=solid](1.8,0.9765542){0.08}
\pscircle[linewidth=0.04,dimen=outer,fillstyle=solid](2.72,0.95655423){0.08}
\pscircle[linewidth=0.04,dimen=outer,fillstyle=solid](1.8,0.2765542){0.08}
\pscircle[linewidth=0.04,dimen=outer,fillstyle=solid](2.72,0.2365542){0.08}
\pscircle[linewidth=0.04,dimen=outer,fillstyle=solid](3.62,0.9365542){0.08}
\pscircle[linewidth=0.04,dimen=outer,fillstyle=solid](3.66,0.2365542){0.08}
\pscircle[linewidth=0.04,dimen=outer,fillstyle=solid](4.36,0.9365542){0.08}
\pscircle[linewidth=0.04,dimen=outer,fillstyle=solid](4.38,0.2365542){0.08}
\pscircle[linewidth=0.04,dimen=outer,fillstyle=solid](5.14,0.9365542){0.08}
\pscircle[linewidth=0.04,dimen=outer,fillstyle=solid](6.02,1.2565542){0.08}
\pscircle[linewidth=0.04,dimen=outer,fillstyle=solid](6.04,0.5565542){0.08}
\pscircle[linewidth=0.04,dimen=outer,fillstyle=solid](6.82,0.9365542){0.08}
\pscircle[linewidth=0.04,dimen=outer,fillstyle=solid](7.8,0.95655423){0.08}
\pscircle[linewidth=0.04,dimen=outer,fillstyle=solid](8.5,1.5965542){0.08}
\pscircle[linewidth=0.04,dimen=outer,fillstyle=solid](9.36,1.6165543){0.08}
\pscircle[linewidth=0.04,dimen=outer,fillstyle=solid](8.52,0.9765542){0.08}
\pscircle[linewidth=0.04,dimen=outer,fillstyle=solid](8.52,0.3565542){0.08}
\pscircle[linewidth=0.04,dimen=outer,fillstyle=solid](9.46,0.3365542){0.08}
\pscircle[linewidth=0.04,dimen=outer,fillstyle=solid](10.14,0.9765542){0.08}
\end{pspicture} 
}
\end{center}
\textit{Upper bound:}~We proceed by ten cases and prove that the collection of da-ecards considered under each case determines $G$ uniquely. We first give a table of outcome proof for the sake of readability.
\begin{center}
\scalebox{0.75}
{\begin{tabular}{|c|c|c|c|c|}\hline
$k$&$m$&$n$&Cases&$adern(G)$\\\hline
&&$1$& $3,8$&$2$\\\cline{3-5} 
&$3$&$2$&$2,~6$&$2$\\\cline{3-5}
&&$\geq3$&$1,~2$&$1$\\\cline{2-5}
\raisebox{1.5ex}{$3$}&&1&$2,~6$&2\\\cline{3-5}
&\raisebox{1.5ex}{$4$}&$\geq2$&$1,~2$&1\\\cline{2-5}
&$\geq5$&$\geq1$&$1,~2$&$1$\\\hline
&&1&$3,~4,~7,~8,~9$&$3$\\\cline{3-5}
&3&$2$&$3,~4,~6,~8,~9$&$2$\\\cline{3-5}
&&$\geq3$&$1,~4,~8,~9$&$2$\\\cline{2-5}
\raisebox{1.5ex}{$4$}&&1&$3,~4,~6~,8,~9$&$2$\\\cline{3-5}
&\raisebox{1.5ex}{$4$}&$\geq2$&$1,~4,~8,~9$&$2$\\\cline{2-5}
&$\geq5$&$\geq1$&$1,~4,~8,~9$&$2$\\\hline
&&1&$3,~4,~5~(if~k>5),~7,~8,~10$&$3$\\\cline{3-5}
&$3$&$2$&$3,~4,~5~(if~k>5),~6,~8,~10$&$3$\\\cline{3-5}
&&$\geq3$&$1,~4,~5~(if~k>5),~8,~10$&$3$\\\cline{2-5}
\raisebox{1.5ex}{$\geq5$}&&1&$3,~4,~5~(if~k>5),~6,~8,~10$&$3$\\\cline{3-5}
&\raisebox{1.5ex}{$4$}&$\geq2$&$1,~4,~5~(if~k>5),~8,~10$&$3$\\\cline{2-5}
&$\geq5$&$\geq1$&$1,~4,~5~(if~k>5),~8,~10$&$3$\\\hline
\end{tabular}
}
\end{center}
\textit{Case~1.}~Any $L$ for $n+m\geq6.$\\
\indent In $L,$ exactly two vertices have degree sum $n+m-1$ and so $G$ can be determined uniquely.\vspace{0.3cm}\\
\textit{Case~$2.$}~Any $K$ for $n+m\geq5$ and $k=3.$\\
\indent In $K,$ the neighbour of the newly added edge must be a 1-vertex and an $(n+m-1)$-vertex.
Since all 1-neighbours of the $(n+m)$-base are similar and there is a unique $(n+m-1)$-vertex, every extension obtained is unique and is isomorphic to $G.$\vspace{0.3cm}\\
\textit{Case~$3.$}~`$L$ and $K$ (or $M_i$) when $n+m<6$ and $k>3$' and `$L$ and $K$ when $n+m<5$ and $k=3$'.\\
\indent Proof is similar to Case~$3$ of Theorem~\ref{1}.\vspace{0.3cm}\\
\textit{Case~$4.$}~$M_i$ and $K$ for $k>3.$\\
\indent The da-ecard $M_i$ forces $G$ to have two $(n+m)$-vertices.
Hence in $K,~e$ must be joined to the $(n+m-1)$-vertex and some 1-vertex.
The only extension non isomorphic to $G,$ obtained by joining $e$ to a 1-neighbour of the $(n+m)$-base, has exactly one da-ecard isomorphic to $K$ (obtained by removing $e$), no more da-ecards isomorphic to $K$ (since the removal of any $(n+m)$-edge results in a da-ecard with cycle of length less than $2(k-1)$ or has at  most ${m-1\choose2}-1$ cycles of length $2(k-1)$), no middle da-ecards (since the removal of any $2$-edge results in a disconnected da-ecard or a da-ecard with 1-vertex at distance $k-2$ from the nearest $(n+m)$-base) and no leaf da-ecards as the removal of any edge results in a da-ecards with at most ${m-1\choose 2}$ cycles of length $2(k-1).$\vspace{0.3cm}\\
\textit{Case~$5.$}~$M_i$ and $M_j$ for $k>5.$\\
\indent Proof is similar to Case~$5.2.2$ of Theorem~\ref{9}.\vspace{0.3cm}\\
\textit{Case~$6.$}~Two isomorphic $L$'s when `$m=3$ and $n=2$' or `$m=4$ and $n=1$'.\\
\indent Proof is similar to Case~$8$ of Theorem~\ref{9}.\vspace{0.3cm}\\
\textit{Case~$7.$}~For $m=3,~n=1$ and $k>3,$ three isomorphic $L$'s.\\
\indent The extension {(other than $G$) of $L,$} obtained by adding an edge joining {the} 1-neighbour of the $(n+m)$-base and a $2$-vertex at distance $2$ from the $(n+m)$-base, has exactly two da-ecards isomorphic to $L$ (obtained by removing the edges incident to $2$-neighbour of the $(n+m)$-vertex).
The above extension and any other extensions (non isomorphic to $G$) have no more leaf da-ecards, since the removal of any $3$-edge results in a da-ecard with cycle of length less than $2(k-1).$\vspace{0.3cm}\\
\textit{Case~$8.$}~Two isomorphic $K$'s when $k>3$ or $n+m<5$ and $k=3.$\\
\indent Every extension other than $G$ of $K,$ has exactly one hub da-ecard, since the removal of any $(n+m)$-edge other than $e$ results in a da-ecard with cycle of length less than $2(k-1)$ or at most ${m-1\choose 2}$ cycles of length $2(k-1).$\vspace{0.3cm}\\
\textit{Case~$9.$}~Two isomorphic $M_i$'s for $k=4.$\\
\indent Every extension other than $G$ of $M_i,$ has exactly one middle da-ecard, since the removal of any $2$-edge other than $e$ results in a da-ecard containing a cycle of length 3.\vspace{0.3cm}\\
\textit{Case~$10.$}~For $k\geq5,$ three isomorphic $M_i$'s.\\
\indent Proof is similar to Case~$11$ of Theorem~\ref{9}. This case completes the proof of the theorem.
\end{proof}

\noindent {\bf Acknowledgement}.  This work is supported by the Research Project EMR/2016/000157 awarded by the Science and Engineering Research Board, Department of Science and Technology, Government of India, New Delhi.\\

\textbf{\Large {References}}
\begin{spacing}{1}
\begin{enumerate}[\hspace{0.01cm}\text{[}1\text{]}]

\bibitem{AM2} P. Anusha Devi and S. Monikandan, Degree associated reconstruction number of
graphs with regular pruned graph, \emph{Ars Combin.} 134 (2017) 29--41.
\bibitem{AM3} P. Anusha Devi and S. Monikandan, Degree associated reconstruction numbers of total graph, \emph{Contrib. Discrete Math.} 12 (2) (2017), 77--90.
\bibitem{AM4} P. Anusha Devi and S. Monikandan, Degree associated Rreconstruction number of connected digraphs with unique end vertex, \emph{Australas. J. Combin.} 66 (3) (2016), 365--377.
\bibitem{AM1} P. Anusha Devi and S. Monikandan, Degree associated edge reconstruction number of graphs with regular pruned graph, \emph{Electron. J. Graph Theory Appl.} (EJGTA) 3 (2) (2015), 146--161.
\bibitem{sur1}  K. J. Asciak, M.A. Francalanza, J. Lauri and W. Myrvold, A survey of some open questions in reconstruction numbers,
\emph{Ars Combin.} 97 (2010), 443--456.
\bibitem{west} M.D. Barrus and D.B. West, Degree-associated reconstruction number of graphs, \emph{Discrete Math.} 310 (2010), 2600--2612. 
\bibitem{bondy2}  J.A. Bondy, A graph reconstructor's manual, in \emph{Surveys in Combinatorics
(Proc. 13th British Combin. Conf.) London Math. Soc. Lecture Note} Ser. 166
(1991), 221--252.
 
\bibitem{Htext}{F. Harary, \emph{Graph Theory}, Addison Wesley, Mass. 1969.}  
\bibitem {Hrec} F. Harary, On the reconstruction of a graph from a collection of subgraphs,  \emph{Theory of graphs and its applications}, (M.~Fieldler ed.) Academic Press, New York (1964), 47--52.
\bibitem {HPrn} F. Harary and M. Plantholt, The graph reconstruction number, \emph{J. Graph Theory} 9 (1985), 451--454.
\bibitem{kelly1}  {P. J. Kelly, On isometric transformations, PhD Thesis, University of Wisconsin–
Madison, 1942.}
\bibitem{m1} M. Ma, H. Shi, H. Spinoza and D. B. West, Degree-associated reconstruction parameters of complete multipartite graphs and their complements, Taiwanese J. Math. {\it Taiwanese J. Math.} 19 (4) (2015), 1271--1284.
\bibitem{m2} M. Ma, H. Shi and D. B. West, The adversary degree associated reconstruction Number of double-brooms, {\it J. Discrete Algorithms} 33 (2015), 150--159.
\bibitem{manvel}  B. Manvel, Reconstruction of graphs - Progress and prospects, \emph{Congr. Numer.} 63 (1988), 177--187.
\bibitem{Molerndis} R. Molina, The edge reconstruction number of a disconnected graph, \emph{J. Graph Theory} 19 (3) (1995), 375--384.
\bibitem{MSr} S. Monikandan and S. Sundar Raj, Degree associated edge reconstruction number, in: \emph{Combinatorial Algorithms}, in: Lect. Notes Comput. Sci., vol. 7643, Springer-Verlag, Berlin (2012) 100--109.
\bibitem{Iwoca} S. Monikandan, P. Anusha Devi and S. Sundar Raj, Degree associated edge reconstruction number of graphs, {\it J. Discrete Algorithms} 23 (2013), 35--41.

\bibitem{Rdi1} S. Ramachandran, On a new digraph reconstruction conjecture, \emph{J. Combin.} Theory Ser. B 31 (1981), 143--149.
\bibitem{Rdrn1} S. Ramachandran, Degree associated reconstruction number of graphs and digraphs, \emph{Mano. Int. J. Math. Scis.} 1 (2000), 41--53. 
\bibitem{Rdrn2} S. Ramachandran, Reconstruction number for Ulam's conjecture, \emph{Ars Combin.} 78 (2006), 289--296.


\bibitem{Stoc} P. K. Stockmeyer, The falsity of the reconstruction conjecture for tournaments, \emph{J. Graph Theory} 1 (1977), 19--25.
\bibitem{ulam}  S. M. Ulam, A collection of mathematical problems, \emph{ Interscience Tracts in Pure and
Applied Mathematics} 8, (Interscience Publishers, 1960).

\end{enumerate}
\end{spacing}

\end{document}